\newtheorem{theorem}{Theorem}
\newtheorem{lemm}{Lemma}
\newtheorem{defi}{Definition}
\newtheorem{coro}{Corollary}
\newtheorem{ex}{Example}
\newtheorem{rem}{Remark}
\title{Descending chains of semistar operations}
     \author{Hyun Seung Choi}
     \address{Department of Mathematics, Glendale Community College, Glendale, California 91208 U.S.A.}
     \email{hyunc@glendale.edu}
     \author{Timothy McEldowney}
     \address{Department of Mathematics, University of California, Riverside, Riverside, California 92521 U.S.A.}
     \email{tmcel001@ucr.edu}
     \author{Andrew Walker}
     \address{Department of Mathematics, Whittier College, Whittier, California 90608 U.S.A.}
     \email{awalker3@whittier.edu}
     \date{November 14, 2017}
     \keywords{semistar operations, localizing systems, valuation domains}
     \subjclass{13A15}
\begin{document}

\begin{abstract}
A class of integer-valued functions defined on the set of ideals of an integral domain $R$ is investigated. We show that this class of functions, which we call ideal valuations, are in one-to-one correspondence with countable descending chains of finite type, stable semistar operations with largest element equal to the $e$-operation.  We use this class of functions to recover familiar semistar operations such as the $w$-operation and to give a solution to a conjecture by Chapman and Glaz when the ring is a valuation domain.
\end{abstract}
\maketitle

\newcommand{\Ari}{\mbox{\textup{Ari}}} 
\newcommand{\NDC}{\mbox{\textup{NDC}}}
\newcommand{\Spec}{\mbox{\textup{Spec}}}
\newcommand{\Min}{\mbox{\textup{Min}}} 
\newcommand{\height}{\mbox{\textup{ht}}} 

\section{Introduction}
Semistar operations were first defined in 1994 by Matsuda and Okabe (\cite{Ma2}) as an extension of the classical star operation, which  originated from the work of Krull (\cite{W.Krull.Idealtheorie}) and was formalized by Gilmer (\cite{G}). As a branch of multiplicative ideal theory, star operations have shown to be a capable tool for describing various classes of integral domains and finding new relationships between them. For example, Zafrullah (\cite[Theorem 8]{Z}) proved that an integral domain $R$ is a Pr{\"u}fer domain if and only if $R$ is an integrally closed domain such that the $t$-operation and $d$-operation coincide, and Wang and McCasland (\cite[Theorem 5.4]{WM}) proved that $R$ is a Krull domain if and only if $R$ is a completely integrally closed domain such that the $w$-operation and $v$-operation coincide. In a similar way, semistar operations have received significant interest in the field of multiplicative ideal theory (for instance, see \cite{FH}, \cite{FJS}, \cite{FL}, \cite{Fontana.Picozza}, \cite{Mi}, \cite{Pd}, \cite{P}, \cite{PT}, to name only a few). \\
\indent We investigate here a class of integer-valued functions defined over the set of ideals of a ring, which we call ideal valuations. Their definition is inspired by the properties  shared by many of the classical integer-valued functions defined over the ideals of a ring, such as polynomial grade and height (under mild conditions).  These ideal valuations can also be constructed naturally from localizing systems, which have been studied extensively (see \cite{B}, \cite{FH}, \cite{GJS} and \cite{S}). We first use ideal valuations to establish a link to semistar operations and localizing systems. Specifically, we show that each ideal valuation $\nu$ induces a countable descending chain of localizing systems $G(\nu, n)$ and semistar operations $*_{\nu_{n}}$. We show in Theorem \ref{bijective} in fact that there is a bijection between the set of ideal valuations and the set of countable descending chains of finite type, stable semistar operations with largest operation equal to the $e$-operation. As an application, we recover in Theorem \ref{wfoundit} the $w$-operation as one of the semistar operations in the chain induced by polynomial grade. \\
\indent We also use ideal valuations to characterize  one-dimensional quasi-local domains in Theorem \ref{qlocal} as the domains $R$ where every ideal valuation is constant on proper ideals.   As another application, the last section addresses the question raised by Chapman and Glaz (\cite[Problem 44]{Cha}): If $\{*_{\alpha}\}_{\alpha \in A}$ is a set of semistar operation on $R$, then when is the semistar operation $*_{A}$ defined by $I^{*_{A}}=\bigcap_{\alpha \in A} I^{*_{\alpha}}$ for each $I \in \overline{F}(R)$ of finite type? This question has been considered by Anderson (\cite[Theorem 2]{And}) for star operations, and Mimouni (\cite[Theorem 2.4]{Mi}) for semistar operations over conducive domains. We investigate this question, restricted to the case when $R$ is a valuation domain. In particular, we show that when $\{*_{\alpha}\}_{\alpha \in A}$ form a countable descending chain of stable and finite type semistar operations on a valuation domain, then $*_{A}$ is of finite type precisely when the corresponding ideal valuation has finite range. \\
\indent Throughout, all rings $R$ are assumed to be integral domains. $K$ will denote the quotient field of $R$, $\mathcal{S}(R)$ the set of ideals of $R$, $\mathcal{S'}(R) = \mathcal{S}(R) - \{0,R\}$ the set of proper ideals of $R$, $f\mathcal{S}(R)$ the set of finitely generated ideals of $R$ and $\overline{F}(R)$ the set of nonzero $R$-submodules of $K$. If $I \in \mathcal{S}(R)$, then $V(I)$ will denote the set of prime ideals of $R$ that contain $I$. A quasi-local domain is an integral domain that has only one maximal ideal (not necessarily Noetherian). When we write, $\overline{\mathbb{N}}$, we are referring to the set $\{\infty, 0,1,2,3\ldots,\}$.
\newpage 
\section{Localizing systems and semistar operations}

Recall that a \textbf{localizing system} $\mathcal{F}$ (see \cite[Section 2]{FH}) of an integral domain $R$ is a nonempty family of ideals of $R$ such that the following conditions hold:
\begin{itemize}
\item (LS1) If $I \in \mathcal{F}$ and $J$ is an ideal of $R$ such that $I \subseteq J$, then $J \in \mathcal{F}$. 
\item (LS2) If $I \in \mathcal{F}$ and $J$ is an ideal of $R$ such that $(J :_{R} i) \in \mathcal{F}$ for each $i \in I$, then $J \in \mathcal{F}$.
\end{itemize}
Furthermore, if for every $I \in \mathcal{F}$, there is some $J \subseteq I$ that's finitely generated and $J \in \mathcal{F}$, then we say that $\mathcal{F}$ is of \textbf{finite type}.

\begin{lemm}
\label{1}
Let $R$ be a domain, and $S$ a multiplicatively closed subset of $f\mathcal{S}(R)$. If \[ \mathcal{F}_{S} = \{ I \in \mathcal{S}(R) \mid J \subseteq I \text{ for some }J \in S\},   \] then $\mathcal{F}_{S}$ is a finite type localizing system on $R$. \end{lemm}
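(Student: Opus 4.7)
The plan is to verify the two localizing system axioms and the finite type property directly from the definition of $\mathcal{F}_S$. Non-emptiness and (LS1) are immediate: any $J \in S$ satisfies $J \subseteq J$, so $S \subseteq \mathcal{F}_S$; and if $J \subseteq I \subseteq I'$ with $J \in S$, then $J \subseteq I'$, so $I' \in \mathcal{F}_S$. The finite type condition is likewise automatic, since every $I \in \mathcal{F}_S$ contains some $J \in S \subseteq f\mathcal{S}(R)$, and this $J$ itself lies in $\mathcal{F}_S$ and is finitely generated.

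The substantive step is (LS2). Suppose $I \in \mathcal{F}_S$ and $H$ is an ideal of $R$ with $(H :_R i) \in \mathcal{F}_S$ for every $i \in I$. Pick $A \in S$ with $A \subseteq I$; since $A \in f\mathcal{S}(R)$, write $A = (a_1, \ldots, a_n)$. For each $k$, choose $B_k \in S$ with $B_k \subseteq (H :_R a_k)$, equivalently $a_k B_k \subseteq H$. The product $A B_1 B_2 \cdots B_n$ lies in $S$ by multiplicative closure. Since $B_1 \cdots B_n \subseteq B_k$ for every $k$ (the remaining $B_j$ are ideals of $R$, hence contained in $R$), one obtains
\[
A B_1 \cdots B_n = \sum_{k=1}^{n} a_k (B_1 \cdots B_n) \subseteq \sum_{k=1}^{n} a_k B_k \subseteq H,
\]
exhibiting an element of $S$ contained in $H$. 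Hence $H \in \mathcal{F}_S$, as required.

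The only point requiring care is the (LS2) verification, but the move is already dictated by the hypotheses: the finite generation of $A$ restricts us to finitely many witnesses $B_k$, and the multiplicative closure of $S$ then lets us amalgamate them with $A$ into a single element of $S$ that still sits inside $H$.
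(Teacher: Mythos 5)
Your proof is correct and is essentially the paper's argument: both pick a finitely generated $A \in S$ inside $I$ with generators $a_1,\ldots,a_n$, take witnesses $B_k \in S$ inside the colon ideals $(H :_R a_k)$, and use the multiplicative closure of $S$ to produce $A B_1 \cdots B_n \in S$ contained in $H$, whence $H \in \mathcal{F}_S$ by (LS1). The paper merely packages the same computation through the intermediate claim that $\mathcal{F}_S$ is closed under products and finite intersections, applied to $(J :_R A) = \bigcap_k (J :_R a_k R)$, which you have simply inlined.
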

\begin{proof}
It's clear that (LS1) holds. We next claim that $\mathcal{F}_{S}$ is multiplicatively closed and is closed under finite intersections. Indeed, if $I,I' \in \mathcal{F}_{S}$, then there are $J,J' \in S$ with $J \subseteq I$ and $J' \subseteq I'$, so that $JJ' \subseteq II' \subseteq I \cap I'$, and since $S$ is multiplicatively closed, $JJ' \in S$. Hence  $II'$ and $I \cap I'$ are both in $\mathcal{F}_{S}$. \\

Now we show that $\mathcal{F}_{S}$ satisfies (LS2). Choose ideals $I,J$ of $R$ so that $I \in \mathcal{F}_{S}$ and $(J:_{R} iR) \in \mathcal{F}_{S}$ for all $i \in I$. We have to show that $J \in \mathcal{F}_{S}$. There exists $I' \subseteq I$ with $I' \in S$. Let $\{i_{k}\}$ be a finite generating set of $I'$. It follows that 
$(J:_{R}I')= (J:_{R} \Sigma i_{k}R)= \cap(J:_{R} i_{k}R)$, so $(J:_{R}I') \in \mathcal{F}_{S}$ and $I'(J:_{R} I')\in \mathcal{F}_{S}$ by the above claim. Then $J \in \mathcal{F}_{S}$ since $ I'(J:_{R} I') \subseteq J$ and by (LS1). Lastly, by definition it follows that $\mathcal{F}_{S}$ is of finite type. \end{proof}

A \textbf{semistar operation} is a map $* \colon \overline{F}(R) \to \overline{F}(R)$ such that for any $I, J \in \overline{F}(R)$ and $x \in K \setminus \{0\}$, 
\begin{enumerate}
\item $I \subseteq I^*$.
\item $I \subseteq J$ implies $I^* \subseteq J^*$.
\item $(xI)^*=xI^*$.
\item $(I^*)^*=I^*$.
\end{enumerate} 

\begin{ex}\normalfont
The following are standard examples of semistar operations:
\begin{itemize}
\item The \emph{identity operation} $d\colon \overline{F}(R) \to \overline{F}(R)$ defined by $I_d=I$ for all $I \in \overline{F}(R)$.
\item The \emph{trivial operation} $e\colon \overline{F}(R) \to \overline{F}(R)$ defined by $I_e=K$ for all $I \in \overline{F}(R)$.
\item The $v$-\emph{operation} $v\colon \overline{F}(R) \to \overline{F}(R)$ defined by $I_v=(R:_{K}(R:_{K}I))$ for all $I \in \overline{F}(R)$.
\item The $t$-\emph{operation} $t\colon \overline{F}(R) \to \overline{F}(R)$ defined by $I_t= \bigcup\{J_v \mid J \subseteq I, J \in f\mathcal{S}(R)\}$ for all $I \in \overline{F}(R)$.
\item The $w$-\emph{operation} $w\colon \overline{F}(R) \to \overline{F}(R)$ defined by $I_w=\bigcup\{(I :_{K} J)\mid J \in f\mathcal{S}(R), J_{v}=R\} $ for all $I \in \overline{F}(R)$.
\end{itemize}
\end{ex}

Recall a semistar operation $* \colon \overline{F}(R) \to \overline{F}(R)$ is said to be \textbf{spectral} if there is some $\Delta \subseteq \text{Spec}(R)$ so that for any $E \in \overline{F}(R)$, \[E^{*} = \bigcap_{P \in \Delta} ER_{P}. \] In this case, we'll write $* = *_{\Delta}$. We say a semistar operation $*$ is \textbf{stable} if $(I \cap J)^*=I^* \cap J^*$ for all $I,J \in \overline{F}(R)$. Given a semistar operation $*$, define $*_{f}$ such that $I^{*_{f}}=\bigcup\{J^* \mid J \subseteq I, J\in f\mathcal{S}(R)\}$ for all $I \in \overline{F}(R)$. Then $*_{f}$ is a semistar operation, and we say $*$ is \textbf{of finite type} if $*=*_{f}$.
Every localizing system $\mathcal{F}$ on a domain $R$ yields a stable semistar operation $*_{\mathcal{F}}$, given as follows (\cite[Proposition 2.4]{FH}): If $I \in \overline{F}(R)$, then \[ I^{*_{\mathcal{F}}} = \bigcup_{J \in \mathcal{F}} (I :_{K} J).      \] 
On the other hand, given a semistar operation $*$ on $R$, the set $\mathcal{F}^*=\{I \in \mathcal{S}(R)\mid I^*=R^*\}$ is a localizing system of $R$ (\cite[Proposition 2.8]{FH}, \cite[Remark 2.9]{FH}). We adopt the notation $\bar{*}$ for the semistar operation $*_{\mathcal{F}^*}$ and $\tilde{*}$ for the semistar operation $\overline{*_{f}}$. That is, $I^{\bar{*}}=\bigcup\{(I :_{K} J)\mid J \in \mathcal{S}(R), J^*=R^*\}$, $I^{\tilde{*}}=\bigcup\{(I :_{K} J)\mid J \in \mathcal{S}(R), J^{*_{f}}=R^*\}$ for all $I \in \overline{F}(R)$ (see \cite[Section 3]{FHP}).  The theorem below gives a relationship between localizing systems of finite type and semistar operations of finite type.

\begin{theorem}
\cite[Proposition 3.2]{FH}
\label{FonHuc}
Let $\mathcal{F}$ be a localizing system and $*$ a semistar operation defined on $R$.
\begin{enumerate}
\item If $\mathcal{F}$ is of finite type, then $*_{\mathcal{F}}$ is of finite type.
\item If $*$ is of finite type, then $\mathcal{F}^*$ is of finite type.
\end{enumerate}
\end{theorem}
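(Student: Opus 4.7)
For part (1), I would unwind the definitions: given $I \in \overline{F}(R)$ and $x \in I^{*_{\mathcal{F}}}$, the definition of $*_{\mathcal{F}}$ supplies some $J \in \mathcal{F}$ with $xJ \subseteq I$. Using finite type of $\mathcal{F}$, I replace $J$ by a finitely generated $J_0 \in \mathcal{F}$ with $J_0 \subseteq J$, so $xJ_0 \subseteq I$. Set $H := xJ_0$. Then $H$ is a finitely generated $R$-submodule of $I$, and since $xJ_0 \subseteq H$ we have $x \in (H :_K J_0) \subseteq H^{*_{\mathcal{F}}}$. This places $x$ into $I^{(*_{\mathcal{F}})_f}$; the reverse inclusion $(*_{\mathcal{F}})_f \leq *_{\mathcal{F}}$ is automatic from idempotence and monotonicity.

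For part (2), I would take $I \in \mathcal{F}^*$, which says $I^* = R^*$, so in particular $1 \in R^* = I^* = I^{*_f}$. By the definition of $*_f$, there is a finitely generated $J \subseteq I$ with $1 \in J^*$. Then $R \subseteq J^*$ gives, after applying $*$ and using idempotence, $R^* \subseteq J^{**} = J^*$; conversely $J \subseteq R$ together with monotonicity gives $J^* \subseteq R^*$. Hence $J^* = R^*$, i.e., $J \in \mathcal{F}^*$, and $J$ is a finitely generated sub-ideal of $I$, establishing finite type.

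The main obstacle I anticipate is purely bookkeeping: the paper's $f\mathcal{S}(R)$ is the set of finitely generated ideals of $R$ (subsets of $R$), whereas in (1) the natural candidate $H = xJ_0$ is a finitely generated $R$-submodule of $K$ that need not lie inside $R$. I would sidestep this by rescaling, choosing $0 \neq d \in R$ with $dx \in R$ so that $dH = (dx)J_0$ becomes a genuine finitely generated ideal contained in $dI$, then invoking the homogeneity axiom $(dI)^{*_{\mathcal{F}}} = dI^{*_{\mathcal{F}}}$ to transfer the conclusion back to $I$. Once this small cosmetic step is made, both directions reduce to the clean slogan that a finite-type witness in one structure ($\mathcal{F}$ or $*$) produces a finite-type witness in the other.
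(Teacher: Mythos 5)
The paper never proves this statement: it is quoted directly from Fontana--Huckaba \cite[Proposition 3.2]{FH} and used as a black box, so there is no internal proof to compare against; your proposal supplies the missing argument, and both halves are correct and are essentially the standard proof. In (1), the finitely generated $J_0\in\mathcal{F}$ inside $J$ gives the finitely generated witness $H=xJ_0\subseteq I$ with $x\in(H:_K J_0)\subseteq H^{*_{\mathcal{F}}}$, and the reverse inequality needs only monotonicity; in (2), $1\in R^*=I^*=I^{*_f}$ produces a finitely generated $J\subseteq I$ with $J^*=R^*$, which is exactly finite type for $\mathcal{F}^*$. One remark on your last paragraph: the ideals-versus-submodules worry is best settled by convention rather than by rescaling. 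In \cite{FH} (and implicitly here, since under the literal ``ideals of $R$'' reading even the identity operation $d$ would fail to be of finite type), $*_f$ ranges over the nonzero finitely generated $R$-submodules of $K$ contained in $I$, so $H=xJ_0$ is already a legitimate witness and no rescaling is needed; in (2) the issue never arises because $I\subseteq R$ forces the witness to be an ideal. Moreover, the patch as you state it is not quite self-contained: to pass from $dx\in(dI)^{(*_{\mathcal{F}})_f}$ back to $x\in I^{(*_{\mathcal{F}})_f}$ you must invoke homogeneity of the finite-type closure $(*_{\mathcal{F}})_f$ itself (not just of $*_{\mathcal{F}}$), and that is precisely the property the literal reading would destroy. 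This only affects the cosmetic step you flagged; the mathematical content of both parts is right.
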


\begin{lemm}
Let $R$ be a domain and $S$ a multiplicatively closed subset of $f\mathcal{S}(R)$. Then if $\mathcal{F}_{S}$ is as in Lemma \ref{1}, then for any $I \in \overline{F}(R)$, \[ I^{*_{\mathcal{F}_{S}}} = \bigcup_{J \in \mathcal{F}_{S}} (I :_{K} J) = \bigcup_{L \in S} (I :_{K} L).    \]
\end{lemm}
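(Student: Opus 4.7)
The plan is to note that the first equality is immediate from the definition of $*_{\mathcal{F}_S}$ given in the discussion before Theorem \ref{FonHuc}, so the entire content of the lemma lies in establishing the second equality
\[ \bigcup_{J \in \mathcal{F}_S} (I :_K J) = \bigcup_{L \in S} (I :_K L). \]
This will be proved by double containment, and both containments are essentially formal: one uses that $S \subseteq \mathcal{F}_S$, and the other uses the defining property of $\mathcal{F}_S$ together with the elementary fact that the colon $(I :_K -)$ reverses inclusions.

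For the containment $\bigcup_{L \in S}(I :_K L) \subseteq \bigcup_{J \in \mathcal{F}_S}(I :_K J)$, I would observe that any $L \in S$ automatically lies in $\mathcal{F}_S$, since $L \subseteq L$ with $L \in S$ witnesses membership. Thus the right-hand union is over a larger index set and contains the left-hand union term-by-term.

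For the reverse containment, I would take an arbitrary $x$ in the left-hand side: there is some $J \in \mathcal{F}_S$ with $xJ \subseteq I$. By definition of $\mathcal{F}_S$, there exists $L \in S$ with $L \subseteq J$. Then $xL \subseteq xJ \subseteq I$, i.e., $x \in (I :_K L)$, and $L \in S$, placing $x$ in the right-hand union. This handles the nontrivial direction.

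There is no real obstacle here; the lemma is an unpacking of definitions and the only subtle point is the observation that, because $S$ is a ``basis'' for $\mathcal{F}_S$ via containment, replacing $J$ by a smaller $L \in S$ can only enlarge the colon ideal. I would keep the write-up short and explicit, citing Lemma \ref{1} once for the fact that $\mathcal{F}_S$ is a (finite type) localizing system so that $*_{\mathcal{F}_S}$ is indeed defined by the stated union formula.
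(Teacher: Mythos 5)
Your proposal is correct and follows essentially the same argument as the paper: the first equality is definitional, the containment $\bigcup_{L \in S}(I :_K L) \subseteq \bigcup_{J \in \mathcal{F}_S}(I :_K J)$ comes from $S \subseteq \mathcal{F}_S$, and the reverse containment comes from choosing $L \in S$ with $L \subseteq J$ and using that the colon reverses inclusions. The only difference is cosmetic (you argue elementwise where the paper writes $(I :_K J) \subseteq (I :_K L')$ directly), so no changes are needed.
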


\begin{proof}
Since $\mathcal{F}_{S}$ is a localizing system, the first equality is by definition of $*_{\mathcal{F}_{S}}$. We now prove the second equality holds. Let $I \in \overline{F}(R)$. Then since $S \subseteq \mathcal{F}_{S}$, obviously \[\cup_{L \in S} (I:L) \subseteq \cup_{J \in \mathcal{F}_{S}}(I:J).\] Conversely, if $J \in \mathcal{F}_{S}$, then there exists $L' \in S$ with $L' \subseteq J$. Thus $(I:J) \subseteq (I:L') \subseteq \cup_{L \in S} (I:L)$, and since this is true for every $J \in \mathcal{F}_{S}$, we have $\cup_{J \in \mathcal{F}_{S}}(I:J) \subseteq \cup_{L \in S} (I:L)$. \end{proof}

\section{Ideal valuation} \begin{defi} \normalfont
Let $R$ be an integral domain. A function $\nu \colon \mathcal{S}(R) \to \overline{\mathbb{N}}$ is an \textbf{ideal valuation} on $R$ if it satisfies the following properties:
\begin{itemize}
\item (IV1) $\nu(0) = 0$ and $\nu(R) = \infty$.
\item (IV2) For any $I,J \in \mathcal{S}(R)$, $\min \{ \nu(I), \nu(J) \} \leq \nu(IJ)$.
\item (IV3) For any $I \in \mathcal{S}(R)$, $\nu(I) = \sup \{ \nu(J) \mid J \subseteq I \text{ and } J \in f\mathcal{S}(R) \}$.
\end{itemize}
\end{defi}

\begin{ex} \normalfont
\label{locex}
If $\mathcal{F}$ is a localizing system of $R$ of finite type, let $\nu_{\mathcal{F}}: \mathcal{S}(R) \rightarrow \overline{\mathbb{N}}$ be the function defined by \[\nu_{\mathcal{F}}(I)=
\begin{cases}
\infty, &\text{ if } I = R \\
1, &\text{if } I \in \mathcal{F} \cap \mathcal{S}'(R)\\
0, &\text{otherwise}. 
\end{cases}
\]
Then $\nu_{\mathcal{F}}$ is an ideal valuation. Indeed, (IV1) is immediate. Given $I,J \in \mathcal{S'}(R)$,  $I, J \in \mathcal{F}$ if and only if $IJ \in \mathcal{F}$. Therefore $\nu_{\mathcal{F}}(IJ) \ge \min\{\nu_{\mathcal{F}}(I), \nu_{\mathcal{F}}(J)\}$, so that (IV2) is met. If $I \not\in \mathcal{F}$, then any ideal contained in $I$ is not an element of $\mathcal{F}$ by (LS1). Hence  \[\nu_{\mathcal{F}}(I)= \sup \{ \nu_{\mathcal{F}}(J) \mid J \subseteq I \text{ and } J \in f\mathcal{S}(R) \},\] so that (IV3) holds. 
\end{ex}

\begin{rem}
\label{cant}
Note that we can't drop the finiteness condition on $\mathcal{F}$ in the previous example. Indeed, let $R$ be a valuation domain whose maximal ideal $M$ is not finitely generated. Then $\{aM\mid a \in R\setminus\{0\}\}$ is the set of nondivisorial ideals of $R$ \cite[Proposition 4.2.5]{FHP}. Thus, in particular, $M$ is nondivisorial and $M_{v}=R$. If $I\neq M$ is a proper ideal of $R$, then $I$ is not principal, so I is not finitely generated and $I_{v}=(aM)_{v}=aM_{v}=aR \neq R$. Hence $\mathcal{F}^{v}=\{I \in \mathcal{S}(R) \mid I_{v}=R\}=\{R, M\}$. Now $\nu_{\mathcal{F}^{v}}(M)=1$, but then $\nu_{\mathcal{F}^{v}}$ is not an ideal valuation since \[\sup \{ \nu_{\mathcal{F}^{v}}(J) \mid J \subseteq M \text{ and } J \in f\mathcal{S}(R) \}=0.\]
\end{rem}

\begin{lemm}
\label{monotonicity}
If $\nu$ is an ideal valuation on $R$, then for  $I,I' \in \mathcal{S}(R)$ and $I \subseteq I'$, $\nu(I) \leq \nu(I')$. 
\end{lemm}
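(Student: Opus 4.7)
The plan is to derive monotonicity directly from axiom (IV3) alone; neither (IV1) nor (IV2) will be needed. Given $I \subseteq I'$ in $\mathcal{S}(R)$, I will apply (IV3) to both ideals to express each $\nu$-value as a supremum over finitely generated subideals, and then compare the two indexing sets using the inclusion $I \subseteq I'$.

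Concretely, (IV3) gives $\nu(I) = \sup\{\nu(J) : J \subseteq I,\ J \in f\mathcal{S}(R)\}$ and $\nu(I') = \sup\{\nu(J) : J \subseteq I',\ J \in f\mathcal{S}(R)\}$. Since any finitely generated $J$ with $J \subseteq I$ automatically satisfies $J \subseteq I'$ by transitivity of inclusion, the indexing set appearing in the first supremum is contained in the one appearing in the second. The supremum in $\overline{\mathbb{N}}$ is monotone under set inclusion, so $\nu(I) \leq \nu(I')$ follows immediately.

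I do not foresee any genuine obstacle: the lemma is essentially the tautological observation that the quantity defining $\nu$ in (IV3) is monotone in $I$ because the sup ranges over a monotonically increasing family of finitely generated ideals. The only minor point worth noting is that the indexing set is never empty (the zero ideal is always a finitely generated subideal), and the edge cases are consistent with (IV1): when $I = 0$, only $J = 0$ appears and the sup is $\nu(0) = 0$, while when $I' = R$ the finitely generated ideal $R = (1)$ lies in the family, forcing the sup to be $\nu(R) = \infty$.
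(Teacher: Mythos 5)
Your proof is correct and is essentially the paper's own argument: both rely solely on (IV3), observing that every finitely generated $J \subseteq I$ also satisfies $J \subseteq I'$, and then passing to the supremum. No issues.
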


\begin{proof}
Suppose that $J$ is finitely generated and contained in $I$, then by (IV3), $\nu(J) \leq \nu(I')$. Taking the supremum over all such $J$ in $I$, by (IV3) again we have $\nu(I) \leq \nu(I')$.
\end{proof}

\begin{lemm}
\label{equality}
If $\nu$ is an ideal valuation on $R$, then for  $I,J \in \mathcal{S}(R)$, $\nu(I\cap J)=\nu(IJ) =\min \{ \nu(I), \nu(J) \}$. In particular, $\nu(I^n)=\nu(I)$ for all $n \geq 1$ and $I \in \mathcal{S}(R)$.
\end{lemm}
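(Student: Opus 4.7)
The plan is to squeeze $\nu(IJ)$ and $\nu(I \cap J)$ between $\min\{\nu(I), \nu(J)\}$ on both sides using the axiom (IV2) together with the monotonicity established in Lemma \ref{monotonicity}, exploiting the containment $IJ \subseteq I \cap J \subseteq I, J$.

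First, (IV2) gives immediately
\[
\min\{\nu(I), \nu(J)\} \leq \nu(IJ).
\]
On the other hand, since $IJ \subseteq I \cap J \subseteq I$ and $IJ \subseteq I \cap J \subseteq J$, Lemma \ref{monotonicity} yields
\[
\nu(IJ) \leq \nu(I \cap J) \leq \nu(I), \qquad \nu(IJ) \leq \nu(I \cap J) \leq \nu(J),
\]
so $\nu(I \cap J) \leq \min\{\nu(I), \nu(J)\}$. Chaining these inequalities gives
\[
\min\{\nu(I), \nu(J)\} \leq \nu(IJ) \leq \nu(I \cap J) \leq \min\{\nu(I), \nu(J)\},
\]
forcing equality throughout. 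This proves the first claim.

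For the consequence $\nu(I^n) = \nu(I)$, I would induct on $n$. The base case $n=1$ is trivial, and for the inductive step apply the equality just proved with $J = I^{n-1}$ to obtain
\[
\nu(I^n) = \nu(I \cdot I^{n-1}) = \min\{\nu(I), \nu(I^{n-1})\} = \nu(I),
\]
using the inductive hypothesis.

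The argument is essentially bookkeeping, so I do not anticipate a real obstacle; the only thing to be careful about is the direction of the inequality in (IV2) (note that it is $\min$, not $\max$, that bounds $\nu(IJ)$ from below), together with remembering that $IJ$ is contained in---not equal to---$I \cap J$, which is precisely what lets the monotonicity lemma do its job.
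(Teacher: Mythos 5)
Your argument is correct and is essentially the paper's own proof: both chain $\nu(IJ)\leq\nu(I\cap J)\leq\min\{\nu(I),\nu(J)\}\leq\nu(IJ)$ using Lemma \ref{monotonicity} and (IV2), then get $\nu(I^{n})=\nu(I)$ by induction with $J=I^{n-1}$. No gaps.
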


\begin{proof}
By (IV2) and the preceding lemma, $\nu(IJ) \leq \nu(I\cap J)\leq \min \{ \nu(I), \nu(J) \}\leq \nu(IJ)$. The second statement follows by letting $J=I^{n-1}$ and a simple induction.
\end{proof}

\begin{ex} \normalfont If $R$ is a Dedekind domain and $\nu$ an ideal valuation on $R$, then for any $I \in \mathcal{S}'(R)$, $\nu(I)$ is completely determined by the value of $\nu$ on maximal ideals of $R$. In fact, since 
$I=M_{1}M_{2}\cdot\cdot\cdot M_{n} \text{ for some maximal ideals } M_{i} \text{ of } R,$ $\nu(I)= \min_{1\le i\le n}\{\nu(M_{i})\}$.
\end{ex}

\begin{lemm}
Let $R$ be an integral domain and $\nu$ an ideal valuation on $R$. Then given $I \in \mathcal{S}(R)$, $\nu(I)=\nu(\sqrt{I})$.
\end{lemm}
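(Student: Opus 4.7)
The plan is to prove the two inequalities separately. The inequality $\nu(I) \le \nu(\sqrt{I})$ is immediate from Lemma \ref{monotonicity}, since $I \subseteq \sqrt{I}$. The content lies in the reverse inequality $\nu(\sqrt{I}) \le \nu(I)$.

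For the reverse inequality, I would invoke (IV3) applied to $\sqrt{I}$: it suffices to show that every finitely generated ideal $J \subseteq \sqrt{I}$ satisfies $\nu(J) \le \nu(I)$. Write $J = (a_1, \dots, a_n)$. Since each $a_i \in \sqrt{I}$, there exist positive integers $k_i$ with $a_i^{k_i} \in I$. Then for $N := k_1 + \cdots + k_n - n + 1$, a pigeonhole argument on the generators $a_1^{e_1}\cdots a_n^{e_n}$ of $J^N$ (with $e_1+\cdots+e_n = N$) forces at least one exponent $e_i \ge k_i$, so every such generator lies in $I$. Hence $J^N \subseteq I$.

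Now I apply monotonicity (Lemma \ref{monotonicity}) to get $\nu(J^N) \le \nu(I)$, and then invoke the last clause of Lemma \ref{equality}, which says $\nu(J^N) = \nu(J)$. Combining, $\nu(J) \le \nu(I)$. Taking the supremum over all finitely generated $J \subseteq \sqrt{I}$ and applying (IV3) one more time yields $\nu(\sqrt{I}) \le \nu(I)$.

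The main conceptual step is recognizing that the radical condition translates, on the finitely generated level, into a power being contained in $I$, which is exactly what Lemma \ref{equality} is built to handle. There is no real obstacle; the only minor bookkeeping is choosing $N$ large enough so that $J^N \subseteq I$, which is standard. Note that this argument genuinely requires (IV3) to reduce from $\sqrt{I}$ (which may be infinitely generated) to finitely generated subideals, since a single power of $\sqrt{I}$ need not lie in $I$.
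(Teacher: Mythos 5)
Your proof is correct and follows the same route as the paper's: one inequality from Lemma \ref{monotonicity}, and for the other, reduce via (IV3) to a finitely generated $J \subseteq \sqrt{I}$, note $J^{N} \subseteq I$ for some $N$, and conclude $\nu(J) = \nu(J^{N}) \le \nu(I)$ using Lemma \ref{equality} and monotonicity. The only difference is that you spell out the pigeonhole argument for $J^{N} \subseteq I$, which the paper simply asserts.
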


\begin{proof}
By Lemma \ref{monotonicity}, we have $\nu(I)\le \nu(\sqrt{I})$. For the other inequality, note that for each $J\in f\mathcal{S}(R)$ such that $J \subseteq \sqrt{I}$, there exists $n \geq 1$ with $J^{n}\subseteq I$. Thus $\nu(J)=\nu(J^{n})\le \nu(I)$ by Lemma \ref{monotonicity} and Lemma \ref{equality}. Hence $\nu(\sqrt{I})=\sup\{\nu(J)\mid J\subseteq \sqrt{I}, J\in f\mathcal{S}(R)\}\le \nu(I)$.
\end{proof}

\begin{lemm} \label{finite_ext}
If $\nu \colon f\mathcal{S}(R) \to \overline{\mathbb{N}}$ satisfies \normalfont{(IV1)-(IV3)} on $f \mathcal{S}(R)$, then $\nu$ extends uniquely to an ideal valuation $\widetilde{\nu}$ on $R$.
\end{lemm}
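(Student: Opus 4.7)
The plan is to define $\widetilde{\nu}$ by the only formula consistent with (IV3), namely
\[
\widetilde{\nu}(I) = \sup\{\nu(J) \mid J \subseteq I,\ J \in f\mathcal{S}(R)\}
\quad\text{for every } I \in \mathcal{S}(R),
\]
and then verify that this assignment (a) agrees with $\nu$ on $f\mathcal{S}(R)$, (b) satisfies (IV1)–(IV3), and (c) is the unique such extension. Uniqueness will be immediate: any ideal valuation extending $\nu$ must satisfy (IV3), which pins down its value on every ideal as the displayed supremum.

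First I would observe that for $I \in f\mathcal{S}(R)$, property (IV3) applied to $\nu$ itself gives $\widetilde{\nu}(I) = \nu(I)$, so $\widetilde{\nu}$ is genuinely an extension. Then (IV1) for $\widetilde{\nu}$ is trivial: $\widetilde{\nu}(0) = \nu(0) = 0$, and $\widetilde{\nu}(R) \geq \nu(R) = \infty$ since $R \in f\mathcal{S}(R)$. Property (IV3) for $\widetilde{\nu}$ follows because, once we know $\widetilde{\nu}$ and $\nu$ coincide on $f\mathcal{S}(R)$, the supremum defining $\widetilde{\nu}(I)$ can be rewritten using either function.

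The main step — and the one I expect to require the most care — is (IV2). Given $I, I' \in \mathcal{S}(R)$, I want to show $\widetilde{\nu}(II') \geq \min\{\widetilde{\nu}(I), \widetilde{\nu}(I')\}$. Fix any $n \in \mathbb{N}$ strictly less than this minimum (the $\infty$ case is handled by letting $n$ range over all of $\mathbb{N}$). By definition of the suprema there exist finitely generated $J \subseteq I$ and $J' \subseteq I'$ with $\nu(J) > n$ and $\nu(J') > n$. The product $JJ'$ is again finitely generated and is contained in $II'$, so
\[
\widetilde{\nu}(II') \geq \nu(JJ') \geq \min\{\nu(J), \nu(J')\} > n,
\]
where the middle inequality is (IV2) applied to $\nu$ on $f\mathcal{S}(R)$. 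Letting $n$ approach the minimum from below yields the desired inequality.

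Finally, for uniqueness, suppose $\mu$ is any ideal valuation on $R$ with $\mu|_{f\mathcal{S}(R)} = \nu$. Applying (IV3) to $\mu$ at an arbitrary $I \in \mathcal{S}(R)$ gives
\[
\mu(I) = \sup\{\mu(J) \mid J \subseteq I,\ J \in f\mathcal{S}(R)\} = \sup\{\nu(J) \mid J \subseteq I,\ J \in f\mathcal{S}(R)\} = \widetilde{\nu}(I),
\]
so $\mu = \widetilde{\nu}$. The only subtlety throughout is bookkeeping the value $\infty$ in the supremum, which is harmless because $\overline{\mathbb{N}}$ is totally ordered with $\infty$ as top element.
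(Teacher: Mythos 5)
Your proposal is correct and follows essentially the same route as the paper: define $\widetilde{\nu}(I)=\sup\{\nu(J)\mid J\subseteq I,\ J\in f\mathcal{S}(R)\}$, note (IV1), (IV3), and uniqueness are immediate, and reduce (IV2) to (IV2) for $\nu$ on a product $JJ'$ of finitely generated subideals. The only difference is cosmetic — you argue (IV2) directly via a threshold $n$ below the minimum, while the paper phrases the same comparison as a proof by contradiction.
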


\begin{proof}
Given such a $\nu$, define $\widetilde{\nu} \colon \mathcal{S}(R) \to \overline{\mathbb{N}}$ so that $\widetilde{\nu}(I)=\sup \{ \nu(J) \mid J \subseteq I \text{ and } J \in f\mathcal{S}(R) \}$. Then clearly $\widetilde{\nu}$ satisfies (IV1) and (IV3). Now given $I,J \in \mathcal{S}(R)$, suppose that $\widetilde{\nu}(IJ) < \min\{\widetilde{\nu}(I),\widetilde{\nu}(J)\}$. Then by (IV3), there are $I',J' \in f\mathcal{S}(R)$ so that $J' \subseteq J$ and $I' \subseteq I$ and \[  \widetilde{\nu}(IJ) < \widetilde{\nu}(I') \leq \widetilde{\nu}(I) \text{ and }  \widetilde{\nu}(IJ) < \widetilde{\nu}(J') \leq \widetilde{\nu}(J).\]
But then $\widetilde{\nu}(IJ) < \min \{ \widetilde{\nu}(I'),\widetilde{\nu}(J')\} \leq \widetilde{\nu}(I'J') = \nu(I'J') \leq \widetilde{\nu}(IJ)$, a contradiction. Thus $\widetilde{\nu}$ satisfies (IV2). Moreover, uniqueness is immediate from (IV3).
\end{proof}

\begin{coro}
Let $R$ be a domain. If $h \colon \Spec(R) \to \overline{\mathbb{N}}$ satisfies $h(0) = 0$ and $h(P) \leq h(Q)$ whenever $P \subseteq Q$ in $\Spec(R)$, then $h$ induces an ideal valuation $\widetilde{h}$ on $R$. Explicitly, for any $I \in \mathcal{S}(R)$, we have $ \widetilde{h}(I) = \sup \{  \inf \{ h(P) : P \in V(J) \} : J \subseteq I, J \in f\mathcal{S}(R)  \}.  $
\end{coro}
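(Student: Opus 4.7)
The plan is to reduce to Lemma \ref{finite_ext} by first building an ideal valuation on the finitely generated ideals and then invoking that lemma to extend uniquely to all of $\mathcal{S}(R)$. Concretely, I would define $\nu \colon f\mathcal{S}(R) \to \overline{\mathbb{N}}$ by
\[ \nu(J) = \inf\{ h(P) : P \in V(J) \}, \]
with the usual convention that an infimum over the empty set is $\infty$. The whole proof is then the verification of (IV1)--(IV3) for this $\nu$, followed by a one-line appeal to Lemma \ref{finite_ext}, since the extension formula $\widetilde{\nu}(I) = \sup\{\nu(J) : J \subseteq I, J \in f\mathcal{S}(R)\}$ produced by that lemma is exactly the formula stated for $\widetilde{h}(I)$.

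For (IV1), note that $V(0) = \Spec(R)$ contains the prime ideal $(0)$, and by hypothesis $h(0) = 0$, so $\nu(0) = 0$; and $V(R) = \emptyset$, so $\nu(R) = \infty$ by convention. For (IV2), I would use the basic identity $V(IJ) = V(I) \cup V(J)$ for $I, J \in f\mathcal{S}(R)$, which yields
\[ \nu(IJ) = \inf_{P \in V(I) \cup V(J)} h(P) = \min\{\nu(I), \nu(J)\}, \]
so (IV2) holds, even as equality. For (IV3), if $J \subseteq I$ are finitely generated, then $V(I) \subseteq V(J)$, so taking the infimum of $h$ over the larger set can only decrease the value, giving $\nu(J) \leq \nu(I)$; taking $J = I$ then shows $\sup\{\nu(J) : J \subseteq I, J \in f\mathcal{S}(R)\} = \nu(I)$.

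With (IV1)--(IV3) verified for $\nu$ on $f\mathcal{S}(R)$, Lemma \ref{finite_ext} produces a unique extension $\widetilde{\nu}$ to an ideal valuation on $R$. Setting $\widetilde{h} := \widetilde{\nu}$ and unwinding the definitions gives the displayed formula in the statement. The only step that uses any nontrivial input is (IV2), and even that reduces to a standard scheme-theoretic identity; the monotonicity hypothesis on $h$ is not needed to verify any of (IV1)--(IV3), but it is consistent with the intuition that $\widetilde{h}$ should not reverse inclusions (indeed Lemma \ref{monotonicity} gives that automatically for any ideal valuation). The main conceptual obstacle, such as it is, is simply to recognize the correct definition of $\nu$ on finitely generated ideals, after which the verification is essentially formal and the extension is handed to us by Lemma \ref{finite_ext}.
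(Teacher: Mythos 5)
Your proposal is correct and follows essentially the same route as the paper: define $\nu(J)=\inf\{h(P) : P\in V(J)\}$ on $f\mathcal{S}(R)$, verify (IV1)--(IV3) there, and invoke Lemma \ref{finite_ext} to get the stated formula for $\widetilde{h}$. The only cosmetic difference is that you check (IV2) via the identity $V(IJ)=V(I)\cup V(J)$ rather than the paper's appeal to a minimal prime of $IJ$, and your side remark that the monotonicity of $h$ is not actually needed in the verification is accurate (the needed monotonicity of $\nu$ on finitely generated ideals comes from reverse inclusion of the $V$-sets).
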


\begin{proof}
We have $h$ defines a map $\overline{h} \colon f\mathcal{S}(R) \to \overline{\mathbb{N}}$, where for $J \in f\mathcal{S}(R)$, \[ \overline{h}(J) = \inf\{ h(P) \mid P \in V(J) \}.     \]

It's clear that since $0$ is a prime ideal, we must have $\overline{h}(0) = 0$. By our convention, the infimum of the empty set is $\infty$, so that $\overline{h}(R) = \infty$. Thus $\overline{h}$ satisfies (IV1) on $f \mathcal{S}(R)$. Now suppose $I,J \in f\mathcal{S}(R)$. Then $h(Q) = \overline{h}(IJ)$ for some $Q \in \Min(IJ)$. Then $Q \in V(I)$ without loss of generality, and so $h(Q) \geq \overline{h}(I)$. Thus (IV2) is met on $\mathcal{S}(R)$. It's clear that (IV3) is satisfied on $f \mathcal{S}(R)$ by the definition of $\overline{h}$ and the monotonicity of $h$, so that by Lemma \ref{finite_ext}, $\overline{h}$ can be extended to an ideal valuation $\widetilde{h}$ on $R$.
\end{proof}
\newpage
\begin{rem}
The ideal valuation $\widetilde{h}$ obtained from $h \colon \Spec(R) \to \overline{\mathbb{N}}$ above is not necessarily an extension of $h$ without an extra condition on the ring $R$, e.g., if $R$ is Noetherian. Indeed, let $V$ be a valuation domain with value group $\prod^{\infty}_{i=1} \mathbb{Z}$ (under lexicographic ordering). Then for each integer $i \geq 0$, $V$ has a unique prime ideal $P_{i}$ of height $i$. The maximal ideal $M$ of $V$ is also the unique prime ideal of infinite height. Therefore, for a prime ideal $P$  properly contained in $M$ there exists prime ideals properly between $P$ and $M$. Thus by \cite[Theorem 17.3.(e)]{G} $M$ is not the radical of a finitely generated (principal) ideal of $V$. Thus, if we let 
\[ h(P) = \left\{
\begin{array}{ll}
      0 & P = 0 \\
      1 & P \notin \{0,M\} \\
      \infty & P = M \\
\end{array} 
\right. \] then $h(M) = \infty$ but $\widetilde{h}(M) = 1$, so that $\widetilde{h}$ is not an extension of $h$. 
\end{rem}

With the above corollary in mind, for an integral domain $R$, we will say that a  function $h \colon \text{Spec}(R) \to \overline{\mathbb{N}}$ is a \textbf{prime valuation} on $R$ if it satisfies the following conditions:
\begin{enumerate}
\item $h(0) = 0$.
\item $h(P) \leq h(Q)$ if $P \subseteq Q$. 
\end{enumerate}
In other words, $h \colon \Spec(R) \to \overline{\mathbb{N}}$ is a prime valuation if and only if $h(0) = 0$ and $h$ is a morphism in the category of posets, where $\Spec(R)$ is partially ordered by inclusion.
\begin{lemm}
Let $R$, $T$ be domains and $R   \to  T$ be a ring homomorphism. If $\nu$ is an ideal valuation on $T$, then $ \nu^{c} \colon \mathcal{S}(R) \to \overline{\mathbb{N}}$ defined by $\nu^{c}(I)=\nu(IT)$ is an ideal valuation on $R$.
\end{lemm}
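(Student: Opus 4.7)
The plan is to verify the three axioms (IV1)--(IV3) for $\nu^c$ directly, using the fact that $\nu$ satisfies them on $T$. Throughout, for $I \in \mathcal{S}(R)$, $IT$ denotes the ideal of $T$ generated by the image of $I$ under the given homomorphism $R \to T$.

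First, (IV1) is immediate: $\nu^c(0) = \nu(0) = 0$ and $\nu^c(R) = \nu(T) = \infty$. For (IV2), the key algebraic observation is that for any $I, J \in \mathcal{S}(R)$ one has the equality of $T$-ideals $(IJ)T = (IT)(JT)$ (both inclusions follow by writing out generic elements as finite sums). Then applying (IV2) for $\nu$ on $T$ to the ideals $IT$ and $JT$ gives
\[ \nu^c(IJ) = \nu((IJ)T) = \nu((IT)(JT)) \geq \min\{\nu(IT), \nu(JT)\} = \min\{\nu^c(I), \nu^c(J)\}. \]

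The main step will be (IV3), since it requires moving back and forth between finitely generated ideals of $R$ and finitely generated ideals of $T$. One direction is straightforward: if $J \subseteq I$ with $J \in f\mathcal{S}(R)$, then $JT \subseteq IT$ and $JT \in f\mathcal{S}(T)$, so $\nu^c(J) = \nu(JT) \leq \nu(IT) = \nu^c(I)$ by Lemma~\ref{monotonicity}, giving $\sup\{\nu^c(J) : J \subseteq I, J \in f\mathcal{S}(R)\} \leq \nu^c(I)$. For the reverse inequality, I would apply (IV3) for $\nu$ on $T$ to $IT$: for any $K \in f\mathcal{S}(T)$ with $K \subseteq IT$, write $K = (k_1,\ldots,k_n)T$ with each $k_\ell \in IT$, and express each $k_\ell$ as a finite $T$-linear combination of elements $i_{\ell m} \in I$. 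Letting $J$ be the ideal of $R$ generated by the finite collection $\{i_{\ell m}\}$ gives $J \in f\mathcal{S}(R)$, $J \subseteq I$, and $K \subseteq JT$, so $\nu(K) \leq \nu(JT) = \nu^c(J)$ by Lemma~\ref{monotonicity}. Taking suprema then yields $\nu^c(I) = \nu(IT) \leq \sup\{\nu^c(J) : J \subseteq I, J \in f\mathcal{S}(R)\}$.

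The only delicate point is this last passage in (IV3) — extracting a finitely generated ideal $J$ of $R$ from a finitely generated ideal $K$ of $T$ inside $IT$ — but this is just unwinding the definition of the extended ideal. Everything else is a direct transfer of the axioms from $T$ to $R$ via $I \mapsto IT$.
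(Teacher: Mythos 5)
Your proposal is correct and follows essentially the same route as the paper: verify (IV1) and (IV2) by the identity $(IJ)T=(IT)(JT)$, and for (IV3) pass from a finitely generated ideal of $T$ inside $IT$ to a finitely generated ideal $J\subseteq I$ of $R$ by collecting the finitely many elements of $I$ appearing in expressions of its generators, then apply Lemma~\ref{monotonicity}. The only cosmetic remark is that you denote a finitely generated ideal of $T$ by $K$, which clashes with the paper's use of $K$ for the quotient field; renaming it (the paper uses $L$) avoids confusion.
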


\begin{proof}
$\nu^{c}$ clearly satisfies (IV1) since $0T =0$ and $RT =T$. Let $I, J \in \mathcal{S}(R)$. Then $\nu^{c}(IJ)=\nu((IJ)T)=\nu(ITJT)\ge \min\{\nu(IT), \nu(JT)\}= \min\{\nu^{c}(I),\nu^{c}(J)\}$, so that (IV2) holds. Lastly, let $I \in \mathcal{S}(R)$ and $L \in f\mathcal{S}(T)$ so that $L \subseteq IT$. Then $L=(\ell_{1},\ldots, \ell_{n})T$ for some $\ell_{k} \in IT$. So $\ell_{k} = \sum i_{kr}t _{kr}$, where $i_{kr} \in I$ and $t_{kr} \in T$. Let $J= \sum i_{kr} R \in f\mathcal{S}(R)$. Then $L \subseteq JT$, so $\nu(L)\le \nu(JT)$ by Lemma \ref{monotonicity}. Since L was arbitrary it follows that $\sup \{ \nu(L) \mid L \subseteq IT \text{ and } L \in f\mathcal{S}(T) \} = \sup \{ \nu(JT) \mid J \subseteq I \text{ and } J \in f\mathcal{S}(R) \}$. Therefore we have $\nu^{c}(I)=\nu(IT)=\sup \{ \nu(L) \mid L \subseteq IT \text{ and } L \in f\mathcal{S}(T)  \} = \sup \{ \nu(JT) \mid J \subseteq I \text{ and }J \in f\mathcal{S}(R) \}=\sup \{\nu^{c}(J) \mid J \subseteq I \text{ and } J \in f\mathcal{S}(R) \}$. Thus $\nu^{c}$ is an ideal valuation of $R$.
\end{proof}

\begin{lemm} Let $R \to T$ be an inclusion of domains. If $\nu$ is an ideal valuation on $R$, then $ \nu^e \colon \mathcal{S}(T)
 \to \overline{\mathbb{N}}$, defined by $\nu^e(I)= \sup\{\nu(J) \mid J \in f\mathcal{S}(R), JT \subseteq I\}$ for each $I \in \mathcal{S}(T)$,  is an ideal valuation on $R$. 
\end{lemm}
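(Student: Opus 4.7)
The target is to verify that $\nu^e$ satisfies the three axioms (IV1)--(IV3) of an ideal valuation on $\mathcal{S}(T)$ (the statement's ``on $R$'' appears to be a typo, since the domain of $\nu^e$ is $\mathcal{S}(T)$). The plan is to take each axiom in turn, with (IV2) requiring the bulk of the work via Lemma \ref{equality}, while (IV1) and (IV3) follow from the definitions with only bookkeeping.

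For (IV1), I would just unpack the definition. Since $JT \subseteq 0$ forces $J = 0$ (as $T$ is a domain containing $R$), we get $\nu^e(0) = \nu(0) = 0$. For $\nu^e(T)$, the choice $J = R$ satisfies $JT = T \subseteq T$ and gives $\nu(R) = \infty$, so the supremum is $\infty$.

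For (IV2), given $I, I' \in \mathcal{S}(T)$, I would show $\min\{\nu^e(I), \nu^e(I')\} \le \nu^e(II')$ by manipulating suprema. For any $J, J' \in f\mathcal{S}(R)$ with $JT \subseteq I$ and $J'T \subseteq I'$, the ideal $JJ'$ lies in $f\mathcal{S}(R)$ and satisfies $(JJ')T = (JT)(J'T) \subseteq II'$, so by definition $\nu(JJ') \le \nu^e(II')$. Lemma \ref{equality} then gives $\min\{\nu(J), \nu(J')\} = \nu(JJ') \le \nu^e(II')$. To finish, I would take the supremum over $J$ first (with $J'$ fixed), noting that since values lie in $\overline{\mathbb{N}}$ the supremum over a bounded set of integers is attained, which lets the $\min$ interact cleanly with the sup; then take the supremum over $J'$ to conclude $\min\{\nu^e(I), \nu^e(I')\} \le \nu^e(II')$. (The case where one of $\nu^e(I), \nu^e(I')$ equals $\infty$ is a separate short argument: pick witnesses of arbitrarily large $\nu$-value on the side that is $\infty$ and combine with any witness on the other side.)

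For (IV3), I would prove both inequalities between $\nu^e(I)$ and $\sup\{\nu^e(L) \mid L \subseteq I,\, L \in f\mathcal{S}(T)\}$. For $\le$: any $J \in f\mathcal{S}(R)$ with $JT \subseteq I$ yields $L := JT \in f\mathcal{S}(T)$ with $L \subseteq I$, and since $J$ itself is admissible in the supremum defining $\nu^e(L)$, we get $\nu(J) \le \nu^e(L)$. For $\ge$: if $L \subseteq I$ is in $f\mathcal{S}(T)$ and $J' \in f\mathcal{S}(R)$ satisfies $J'T \subseteq L$, then $J'T \subseteq I$ too, so $\nu(J') \le \nu^e(I)$; taking the sup over $J'$ gives $\nu^e(L) \le \nu^e(I)$.

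The only real obstacle is the careful handling of suprema in (IV2), particularly commuting $\min$ with $\sup$; everything else is a direct unwinding of definitions together with Lemma \ref{equality} and Lemma \ref{monotonicity}.
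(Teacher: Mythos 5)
Your proposal is correct and follows essentially the same route as the paper: (IV1) directly, (IV2) by multiplying finitely generated witnesses and invoking Lemma \ref{equality}, and (IV3) by observing that $JT$ is a finitely generated ideal of $T$ contained in $I$ together with monotonicity of $\nu^e$. Your sup/min bookkeeping (attained suprema in the finite case, arbitrarily large witnesses otherwise) is just a condensed packaging of the paper's explicit three-case analysis for (IV2), and you are right that ``on $R$'' in the statement should read ``on $T$''.
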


\begin{proof}
Clearly $\nu^e$ satisfies (IV1). Suppose $I_{1},I_{2} \in \mathcal{S}(T)$. Assume that both $\nu^e(I_{1})$ and $\nu^e(I_{2})$ are finite. Then there exist $J_{r} \in f\mathcal{S}(R)$ such that $J_{r}T \subseteq I_{r}$ and $\nu^e(I_{r})=\nu(J_{r})$ for $r=1,2$. Now by Lemma \ref{equality} it follows that $\nu^e(I_{1}I_{2}) \ge \nu(J_{1}J_{2})=\min\{ \nu(J_{1}), \nu(J_{2})\}=\min\{\nu^e(I_{1}),\nu^e(I_{2})\}$. Now assume that only one of $\nu^e(I_{1})$ and $\nu^e(I_{2})$ is infinite. Without loss of generality, assume $\nu^e(I_{1})=\infty$. Choose $L \in f\mathcal{S}(R)$ such that $LT \subseteq I_{2}$ and $\nu^e(I_{2})=\nu(L)$. Now there exists $J \in f\mathcal{S}(R)$ such that $\nu(J)>\nu(L)$, $J \in f\mathcal{S}(R)$ and $JT \subseteq I_{1}$. Hence by Lemma \ref{equality} we have $\nu^e(I_{1}I_{2})\ge \nu(JL)=\min\{\nu(J),\nu(L)\}=\nu(L)=\nu^e(I_{2})=\min\{\nu^e(I_{1}),\nu^e(I_{2})\}$. Finally, if $\nu^e(I_{1})=\nu^e(I_{2})=\infty$, then for each $n \geq 1$ there exists $J_{n},L_{n} \in f\mathcal{S}(R)$ such that $J_{n}T \subseteq I_{1}, L_{n}T  \subseteq I_{2}$ with $\nu(J_{n}), \nu(L_{n})>n$. Thus $\nu^e(I_{1}I_{2})\ge \nu(JL)=\min\{\nu(J),\nu(L)\}>n$. Since this is true for arbitrary $n \geq 1$, $\nu^e(I_{1}I_{2})=\infty$. Hence $\nu^e$ satisifes (IV2).

It still remains to show that $\nu^e$ satisifes (IV3).  Suppose that $\nu^e(I)=n < \infty$. Note that for each $I_{1},I_{2} \in \mathcal{S}(T)$, if $I_{1} \subseteq I_{2}$, then $\nu^e(I_{1}) \le \nu^e(I_{2})$. Thus it suffices to show that given $I \in \mathcal{S}(R)$, there exists $J \in fS(R)$ such that  $JT \subseteq I$ and $\nu^e(I) \le \nu^e(JT)$. Now there exists $J \in f\mathcal{S}(R)$ such that $JT \subseteq I$ and $\nu(J)=n$. But then $\nu^e(JT)=\sup\{\nu(L) \mid L \in f\mathcal{S}(R), LT \subseteq JT\} \ge \nu(J)=n$. Hence $\nu^e(I) \le \nu^e(JT)$, and we're done. For the case when $\nu^e(I)= \infty$, given any $n \geq 1$, we have $J_{n} \in f\mathcal{S}(R)$ with $J_{n}T \subseteq I$ such that $\nu(J_{n})\ge n$. Then $\nu^e(J_{n}T)\ge n$, so $\nu^e(I)=\sup\{\nu^e(JT) \mid JT \subseteq I, JT \in f\mathcal{S}(T)\} = \infty$. Therefore $\nu^e$ satisifes (IV3).
\end{proof}

\begin{lemm} Let $R   \to  T$ be an inclusion of domains. Then the following hold: 
\begin{enumerate}[label=(\roman*)]
\item If $\nu$ is an ideal valuation on $R$, then $ \nu^{ece} = \nu^e $. 
\item  If $\nu$ is an ideal valuation on $T$, then $ \nu^{cec} = \nu^c $.
\end{enumerate} 
\end{lemm}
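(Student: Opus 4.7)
The plan is to unfold both compositions $\nu^{ece}$ and $\nu^{cec}$ directly from the definitions and reduce each identity to two ingredients: the tautological ``use the witness itself'' trick for one direction, and either monotonicity (Lemma \ref{monotonicity}) or the finitely-generated approximation argument from the preceding lemma for the other. No new structural fact about ideal valuations should be needed.

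For (i), I first expand
\[
\nu^{ece}(I) \;=\; \sup\bigl\{ \nu^{ec}(J) : J\in f\mathcal{S}(R),\ JT\subseteq I \bigr\} \;=\; \sup_{J}\ \sup_{J'}\, \nu(J'),
\]
where the inner supremum ranges over $J'\in f\mathcal{S}(R)$ with $J'T\subseteq JT$ and the outer over $J\in f\mathcal{S}(R)$ with $JT\subseteq I$. The inequality $\nu^{ece}(I)\leq\nu^{e}(I)$ is immediate: any admissible pair $(J,J')$ satisfies $J'T\subseteq JT\subseteq I$, so $\nu(J')$ is already one of the terms in the supremum defining $\nu^{e}(I)$. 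For the reverse direction, taking $J'=J$ inside the inner supremum gives $\nu^{ec}(J)\geq\nu(J)$, and taking the supremum over all such $J$ with $JT\subseteq I$ recovers $\nu^{e}(I)$ on the nose.

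For (ii), the analogous unfolding is
\[
\nu^{cec}(I)\;=\;\nu^{ce}(IT)\;=\;\sup\bigl\{ \nu(JT) : J\in f\mathcal{S}(R),\ JT\subseteq IT\bigr\}.
\]
The inequality $\nu^{cec}(I)\leq\nu^{c}(I)=\nu(IT)$ is immediate from Lemma \ref{monotonicity}, since $JT\subseteq IT$ forces $\nu(JT)\leq\nu(IT)$. For the reverse direction I would apply (IV3) to rewrite $\nu(IT)$ as a supremum over $L\in f\mathcal{S}(T)$ with $L\subseteq IT$, and then for each such $L=(\ell_{1},\ldots,\ell_{n})T$ expand $\ell_{k}=\sum_{r}i_{kr}t_{kr}$ with $i_{kr}\in I$ and set $J=\sum_{k,r}i_{kr}R\in f\mathcal{S}(R)$, so that $L\subseteq JT\subseteq IT$ and hence $\nu(L)\leq\nu(JT)\leq\nu^{cec}(I)$. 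This is exactly the finitely-generated approximation argument from the previous lemma, so I would cite it rather than reproduce it in full.

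The main obstacle is purely bookkeeping in part (i): keeping the nested suprema aligned and verifying that a witness chosen for the inner sup (over $J'$) automatically lies in the set defining the outer sup (over $J$), via the transitivity $J'T\subseteq JT\subseteq I$. Once that is set up, both inequalities in both parts are one-line consequences of monotonicity and the identity of the witness with itself.
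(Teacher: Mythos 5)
Your proof is correct and takes essentially the same route as the paper: in each part you establish the two inequalities with the same witness trick (taking $J'=J$, respectively taking $J=\sum i_{kr}R\subseteq I$ with $L\subseteq JT\subseteq IT$) together with monotonicity of the valuation. The only cosmetic difference is in (i), where you unfold the nested suprema and use the transitivity $J'T\subseteq JT\subseteq I$ directly, whereas the paper instead invokes (IV3) for $\nu^{e}$ (via the preceding lemma that $\nu^{e}$ is an ideal valuation on $T$); this is a harmless, slightly more elementary shortcut.
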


\begin{proof}

$(i)$ Note that given $I \in \mathcal{S}(T)$,
\begin{align*}
\hspace{1.1cm} \nu^{ece}(I) &= \sup\{\nu^{ec}(J) \mid J \in f\mathcal{S}(R), JT \subseteq I\} \\ &= \sup\{\nu^{e}(JT) \mid J \in f\mathcal{S}(R), JT \subseteq I\} & \\ &\le \sup\{\nu^{e}(L) \mid L \in f\mathcal{S}(T), L \subseteq I\} \\ &=\nu^{e}(I), \end{align*} On the other hand, for each $J \in f\mathcal{S}(R)$ we have $\nu(J) \le \nu^{e}(JT)$. So \begin{align*}\nu^{e}(I) &= \sup\{\nu(J) \mid J \in f\mathcal{S}(R), JT \subseteq I\} \\ &\le \sup\{\nu^{e}(JT)\mid J \in f\mathcal{S}(R), JT \subseteq I\}=\nu^{ece}(I), \end{align*} and thus the claim follows.  \\

$(ii)$ Given $I \in \mathcal{S}(R)$, \begin{align*}
\nu^{cec}(I)&=\nu^{ce}(IT)\\ &= \sup\{\nu^{c}(J) \mid J \in f\mathcal{S}(R), JT \subseteq IT\} \\ &=  \sup\{\nu(JT) \mid J \in f\mathcal{S}(R), JT \subseteq IT\}. \end{align*}
 Since $ JT \subseteq IT\ $, we have $\nu(JT)  \le \nu(IT)$ and so \begin{align*} \nu^{cec}(I) &= \sup\{\nu(JT) \mid J \in f\mathcal{S}(R), JT \subseteq IT\} \\ &\le \nu(IT) = \nu^c(I) . \end{align*} On the other hand if $ J' \subseteq IT $ and $ J' \in f\mathcal{S}(T) $ then there is some $ J \in f\mathcal{S}(R)  $  such that $ J' \subseteq JT $.  Thus \begin{align*} \hspace{0.55cm}\nu^c(I) &= \nu (IT)\\ &=  \sup \{ \nu(J') \mid J' \subseteq IT \text{ and } J' \in f\mathcal{S}(T) \}\\  &\le  \sup\{\nu(JT) \mid J \in f\mathcal{S}(R), JT \subseteq IT\}\\  &=   \nu^{cec}(I),\end{align*} and so the claim follows.
\end{proof}

Example \ref{locex} suggests a relationship between localizing systems of finite type and ideal valuations. We'll investigate this further in the following section.

\section{ Relationship between Localizing Systems and Ideal Valuations}

Let $n \geq 0$ and $\nu$ an ideal valuation on a domain $R$. We consider the sets \[ \widetilde{G}(\nu,n) := \{ J \in f\mathcal{S}(R) \mid \nu(J) \geq n\}. \]  It's easy to see that $\widetilde{G}(\nu,n)$ is multiplicatively closed, so that by Lemma \ref{1}, we have an induced localizing system \[G(\nu,n) := \mathcal{F}_{\widetilde{G}(\nu,n)}.\] \newpage In fact, we have that \[ G(\nu,n) = \{ J \in \mathcal{S}(R) \mid \nu(J) \geq n \}, \] which follows immediately from (IV3). So by our work in the previous section, for every ideal valuation $\nu$ and $n \geq 0$ we have a finite type, stable semistar operation $\nu_{n} \colon \overline{F}(R) \to \overline{F}(R)$ given by \[ I \mapsto I_{\nu_{n}}  := \bigcup_{J \in G(\nu,n)} (I :_{K} J) = \bigcup_{J \in \widetilde{G}(\nu,n)} (I :_{K} J).\]

We'll let $\mathcal{C}_{\nu}$ denote this set $\{ \nu_{n} \}^{\infty}_{n=0}$ of stable, finite type semistar operations.

\begin{lemm}
\label{basic arithmetic star} Let $R$ be a domain and $\nu$ an ideal valuation on $R$. Then the following hold:
\begin{enumerate}[label=(\roman*)]
\item If $n \geq m$, then $\nu_{m} \geq \nu_{n}$.
\item $\nu_{0} = e$.
\end{enumerate}
\end{lemm}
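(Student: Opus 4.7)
The plan is simply to unwind the definitions recorded just above the lemma. Recall that
\[ G(\nu,n) = \{J \in \mathcal{S}(R) \mid \nu(J) \geq n\} \]
and
\[ I_{\nu_n} = \bigcup_{J \in G(\nu,n)} (I :_K J) \]
for every $I \in \overline{F}(R)$, and that the order on semistar operations is given by $*_1 \leq *_2$ iff $I^{*_1} \subseteq I^{*_2}$ for all $I$.

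For part (i), I would observe that $n \geq m$ forces $G(\nu,n) \subseteq G(\nu,m)$, since any ideal with $\nu$-value at least $n$ automatically has $\nu$-value at least $m$. Hence the union defining $I_{\nu_n}$ is taken over a smaller index set than the one for $I_{\nu_m}$, giving $I_{\nu_n} \subseteq I_{\nu_m}$ for every $I \in \overline{F}(R)$. In the above order this reads $\nu_m \geq \nu_n$, as required.

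For part (ii), the key remark is that because $\nu$ is $\overline{\mathbb{N}}$-valued, every ideal satisfies $\nu(J) \geq 0$, so $G(\nu,0) = \mathcal{S}(R)$; in particular the zero ideal belongs to $G(\nu,0)$ by (IV1). Since $(I :_K 0) = K$ for every $I \in \overline{F}(R)$, the union defining $I_{\nu_0}$ already contains $K$, and hence $I_{\nu_0} = K = I_e$, i.e., $\nu_0 = e$. Both parts are immediate from the definitions, so I do not anticipate a genuine obstacle; the only minor subtlety is the convention that $(I :_K 0) = K$, which is exactly what forces the $n = 0$ operation to collapse onto the trivial operation.
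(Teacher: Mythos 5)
Your proof is correct and follows essentially the same route as the paper: part (i) via the containment $G(\nu,n)\subseteq G(\nu,m)$, and part (ii) via $G(\nu,0)=\mathcal{S}(R)$. Your remark that $(I:_K 0)=K$ (the zero ideal lies in $\mathcal{S}(R)$ by the paper's conventions and has $\nu(0)=0$) simply makes explicit the step the paper leaves unstated when it asserts $\bigcup_{J\in\mathcal{S}(R)}(I:_K J)=K$.
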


\begin{proof}
$(i)$ If $n \geq m$, then $G(\nu,n) \subseteq G(\nu,m)$, so that $\nu_{m} \geq \nu_{n}$. For $(ii)$, just note that $G(\nu,0) = \mathcal{S}(R)$, so that for any $I \in \overline{F}(R)$, $I_{\nu_{0}} = \bigcup_{J \in \mathcal{S}(R)} (I :_{K} J) = K$. 
\end{proof}

By a \textbf{standard, countable descending chain of  finite type, stable semistar operations}, we mean a family $\mathcal{C} = \{ *_{n} \}^{\infty}_{n=0}$  of finite type, stable semistar operations where  $*_{i} \geq *_{i+1}$ for each $i \geq 0$, where $*_{0} = e$. Such a family $\mathcal{C}$ defines now a function  \begin{align*}   \nu_{\mathcal{C}} \colon & \mathcal{S}(R) \to \overline{\mathbb{N}}, \text{ where } \\
& I \mapsto \nu_{\mathcal{C}}(I) = \sup \{ k \mid I_{*_{k}} = R_{*_{k}}  \}  \text{ and } \nu_{\mathcal{C}}(0) = 0. \end{align*} 
\begin{lemm}
Suppose  $\mathcal{C}$ is a standard, countable descending chain of finite type, stable semistar operations on a domain $R$. Then $\nu_{\mathcal{C}}$ is an ideal valuation on $R$. 
\end{lemm}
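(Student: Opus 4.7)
The plan is to verify the three axioms (IV1), (IV2), (IV3) of an ideal valuation by translating the statement $I^{*_k} = R^{*_k}$ into the membership condition $I \in \mathcal{F}^{*_k}$, and then exploiting the hypothesis that each $*_k$ is stable and of finite type. Property (IV1) is essentially by definition: $\nu_{\mathcal{C}}(0) = 0$ by convention, and since $R^{*_k} = R^{*_k}$ trivially for every $k$, we have $\nu_{\mathcal{C}}(R) = \infty$.

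For (IV2), with $I, J \in \mathcal{S}(R)$, I would first dispose of the trivial cases where $I$ or $J$ is zero. For the main case, the key observation is that every localizing system $\mathcal{F}$ is multiplicatively closed: if $I, J \in \mathcal{F}$, then for each $i \in I$ the quotient $(IJ :_R i)$ contains $J$, so it lies in $\mathcal{F}$ by (LS1), and then (LS2) forces $IJ \in \mathcal{F}$. Applying this to $\mathcal{F}^{*_k}$, I get that for each integer $k \leq \min\{\nu_{\mathcal{C}}(I), \nu_{\mathcal{C}}(J)\}$ the product $IJ$ lies in $\mathcal{F}^{*_k}$, so $\nu_{\mathcal{C}}(IJ) \geq k$. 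Letting $k$ range over all admissible values yields (IV2) in both the finite and infinite cases.

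For (IV3), I would show both inequalities. The easy direction is $\geq$: if $J \subseteq I$ is finitely generated and $J^{*_k} = R^{*_k}$, then since $I \subseteq R$ and $*_k$ is monotone, the chain $R^{*_k} = J^{*_k} \subseteq I^{*_k} \subseteq R^{*_k}$ forces $I^{*_k} = R^{*_k}$, giving $\nu_{\mathcal{C}}(J) \leq \nu_{\mathcal{C}}(I)$. For the opposite inequality, for each integer $k \leq \nu_{\mathcal{C}}(I)$ we have $I \in \mathcal{F}^{*_k}$; since $*_k$ is of finite type, Theorem \ref{FonHuc} tells us that $\mathcal{F}^{*_k}$ is of finite type, so there exists $J_k \in f\mathcal{S}(R)$ with $J_k \subseteq I$ and $J_k \in \mathcal{F}^{*_k}$, i.e., $\nu_{\mathcal{C}}(J_k) \geq k$. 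Taking the supremum over $k$ then recovers $\nu_{\mathcal{C}}(I)$ whether it is finite or $\infty$.

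The only real obstacle is careful bookkeeping: handling the zero ideal separately and correctly interpreting the supremum when $\nu_{\mathcal{C}}(I)$ or $\nu_{\mathcal{C}}(J)$ equals $\infty$. The substantive mathematical content reduces to two facts that are essentially on the table, namely that every localizing system is multiplicatively closed and that the finite-type condition transfers between $*_k$ and $\mathcal{F}^{*_k}$ via Theorem \ref{FonHuc}; no deeper input seems to be required.
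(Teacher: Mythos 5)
Your proof is correct, and it reaches the lemma by a slightly different route: you translate everything into the localizing systems $\mathcal{F}^{*_k}$, whereas the paper works directly with the semistar operations. For (IV2) the paper uses the identity $(IJ)^{*_k} = (I^{*_k}J^{*_k})^{*_k}$ to conclude $(IJ)^{*_k} = R^{*_k}$ for all $k \le \min\{\nu_{\mathcal{C}}(I),\nu_{\mathcal{C}}(J)\}$, while you instead prove from (LS1) and (LS2) that every localizing system is multiplicatively closed and apply this to $\mathcal{F}^{*_k}$; the two arguments carry the same content, yours having the small merit of isolating a reusable fact that the paper only uses implicitly (e.g.\ in Example \ref{locex}). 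For (IV3) the paper argues inline: $I^{*_t} = R^{*_t}$ gives $1 \in I^{*_t}$, and finite type of $*_t$ produces a finitely generated $J \subseteq I$ with $1 \in J^{*_t}$, hence $J^{*_t} = R^{*_t}$; you obtain the same finitely generated witness by citing Theorem \ref{FonHuc}(2), i.e.\ that $\mathcal{F}^{*_k}$ is a finite type localizing system, which is precisely the statement the paper reproves on the spot, so your version is a bit more economical (and your treatment of the zero ideal is more careful than the paper's). The one step you leave implicit --- that $\nu_{\mathcal{C}}(I) \ge k$ really yields $I \in \mathcal{F}^{*_k}$, i.e.\ that the descending-chain hypothesis makes $\{k \mid I^{*_k} = R^{*_k}\}$ an initial segment of $\mathbb{N}$ --- is equally implicit in the paper and follows at once from $*_k \ge *_{k'}$ for $k \le k'$, since $1 \in I^{*_k}$ forces $I^{*_k} = R^{*_k}$.
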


\begin{proof}
We first observe that since $I_{e} = K = R_{e}$ for any $I \in \mathcal{S}(R)$, it follows that $\nu_{C}(I) \geq 0$. It's clear also that $\nu_{\mathcal{C}}(R) = \infty$, so that (IV1) holds. Suppose now that $I,J \in \mathcal{S}(R)$. We have that $(IJ)_{*_{k}} = (I_{*_{k}}J_{*_{k}})_{*_{k}}$ for all $k \geq 0$. Say $t = \min \{ \nu_{C}(I), \nu_{C}(J)    \}$. Then $(IJ)_{*_{k}} = (I_{*_{k}}J_{*_{k}})_{*_{k}} = (R_{*_{k}}R_{*_{k}})_{*_{k}} = (R_{*_{k}})_{*_{k}} = R_{*_{k}}$ for all $k \leq t$, so that $\nu_{\mathcal{C}}(IJ) \geq t$. Thus (IV2) is met. \\

Say now $I \in \mathcal{S}(R)$ and let $t = \nu_{\mathcal{C}}(I)$. If $t < \infty$, then $I_{*_{t}} = R_{*_{t}}$ but $I_{*_{t+1}} \neq R_{*_{t+1}}$. It follows that $1 \in I_{*_{t}}$, so that since $*_{t}$ is of finite type, there is some $J \subseteq I$ that's finitely generated and $1 \in J_{*_{t}}$. Thus $J_{*_{t}} = R_{*_{t}}$. Even more, $J \subseteq I$ implies $J_{*_{t+1}} \subseteq I_{*_{t+1}} \neq R_{*_{t+1}}$, so that $t = \nu_{\mathcal{C}}(J)$. By the same reasoning, for any $J' \subseteq I$ finitely generated, $\nu_{\mathcal{C}}(J') \leq t$, so that $t = \sup \{ \nu_{\mathcal{C}}(J) \mid J \subseteq I, J \in f\mathcal{S}(R)     \}, $
at least when $t < \infty$. When $t = \infty$,  $1 \in I_{*_{t}}$ for each $t \geq 1$, so that as in the argument above, there is some $J \subseteq I$ finitely generated with $\nu_{\mathcal{C}}(J) = t$. So $\sup \{ \nu_{\mathcal{C}}(J) \mid J \subseteq I, J \in f\mathcal{S}(R)\} = \infty$.  Thus (IV3) holds.
\end{proof}

\begin{theorem}
\label{bijective}
There is a bijective correspondence $\Psi$ from the set of ideal valuations on $R$ to the set of standard, countable descending chain of finite type, stable semistar operations on $R$, given by $ \Psi(\nu) = \mathcal{C}_{\nu}$ with inverse map $\Psi^{-1}$ given by $ \Psi^{-1}(\mathcal{C}) = \nu_{\mathcal{C}}.$    
\end{theorem}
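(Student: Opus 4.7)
The plan is to verify that $\Psi^{-1} \circ \Psi$ and $\Psi \circ \Psi^{-1}$ are both the identity; well-definedness of each map follows from the preceding lemmas. The central tool, standard from \cite{FH}, is the bijection between localizing systems $\mathcal{F}$ on $R$ and stable semistar operations $*$ on $R$ given by $\mathcal{F} \leftrightarrow *_\mathcal{F}$, with inverse $* \mapsto \mathcal{F}^*$, which by Theorem \ref{FonHuc} restricts to a bijection between finite-type localizing systems and stable, finite-type semistar operations. In particular $\mathcal{F}^{*_\mathcal{F}} = \mathcal{F}$ and $*_{\mathcal{F}^*} = *$ under the relevant hypotheses.

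\emph{Step 1: $\Psi^{-1}(\Psi(\nu)) = \nu$.} Fix an ideal valuation $\nu$. By construction $\nu_n = *_{G(\nu,n)}$, where $G(\nu, n) = \{J \in \mathcal{S}(R) \mid \nu(J) \geq n\}$ is a finite-type localizing system (Lemma \ref{1}). The bijection above yields $\mathcal{F}^{\nu_n} = G(\nu, n)$. Hence for $I \in \mathcal{S}(R) \setminus \{0\}$,
\[
\nu_{\mathcal{C}_\nu}(I) = \sup\{n \mid I_{\nu_n} = R_{\nu_n}\} = \sup\{n \mid I \in \mathcal{F}^{\nu_n}\} = \sup\{n \mid \nu(I) \geq n\} = \nu(I),
\]
and $\nu_{\mathcal{C}_\nu}(0) = 0 = \nu(0)$ by convention.

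\emph{Step 2: $\Psi(\Psi^{-1}(\mathcal{C})) = \mathcal{C}$.} Given $\mathcal{C} = \{*_n\}_{n=0}^\infty$, I need to show $(\nu_\mathcal{C})_n = *_n$ for each $n$. Both operations are stable and of finite type, so by the bijection it suffices to verify $G(\nu_\mathcal{C}, n) = \mathcal{F}^{*_n}$. The key observation is that the set $\{k \mid J_{*_k} = R_{*_k}\}$ is downward closed: if $J_{*_k} = R_{*_k}$ and $\ell \leq k$, then $1 \in R \subseteq R_{*_k} = J_{*_k} \subseteq J_{*_\ell}$ (the last inclusion using $*_\ell \geq *_k$), hence $R \subseteq J_{*_\ell}$, and applying $*_\ell$ yields $R_{*_\ell} \subseteq (J_{*_\ell})_{*_\ell} = J_{*_\ell}$, while the reverse inclusion is trivial from $J \subseteq R$. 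Consequently $\nu_\mathcal{C}(J) \geq n$ iff $J_{*_n} = R_{*_n}$, i.e., $G(\nu_\mathcal{C}, n) = \mathcal{F}^{*_n}$, as required.

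The only substantive step is the downward-closure argument in Step 2, which lets the supremum defining $\nu_\mathcal{C}(J)$ collapse to a single-level membership condition. Everything else is a direct translation between ideal valuations and their associated localizing systems, mediated by the finite-type stable bijection from \cite{FH}.
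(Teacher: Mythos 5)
Your proposal is correct and follows essentially the same route as the paper: both directions are verified by identifying $G(\nu_{\mathcal{C}},n)$ with $\mathcal{F}^{*_{n}}$ and invoking the Fontana--Huckaba correspondence between finite type localizing systems and stable, finite type semistar operations. Your explicit downward-closure argument and the identity $\mathcal{F}^{\nu_{n}}=G(\nu,n)$ simply spell out steps the paper's proof uses implicitly (via \cite[Remark 2.9, Theorem 2.10]{FH}), so there is nothing further to flag.
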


\begin{proof}
Suppose that $\mathcal{C} = \{*_{n}\}^{\infty}_{n=0}$ is a standard, countable descending chain of finite type, stable semistar operations on $R$. We first claim that $\mathcal{C} = \mathcal{C}_{\nu_{\mathcal{C}}}$, or in other words, that $(\nu_{\mathcal{C}})_{n} = *_{n}$ for all $n \geq 0$. Indeed, let $I \in \overline{F}(R)$. Then \[ I_{(\nu_{\mathcal{C}})_{n}} = \bigcup_{J \in G(\nu_{\mathcal{C}},n)} (I :_{K} J).\] Now observe that $G(\nu_{\mathcal{C}},n) = \{ J \in \mathcal{S}(R) \mid \nu_{\mathcal{C}}(J) \geq n  \} = \{ J \in \mathcal{S}(R) \mid J_{*_{n}}  = R_{*_{n}}    \}$. Thus we have that \[ I_{(\nu_{\mathcal{C}})_{n}} = \bigcup_{J_{*_{n}} = R_{*_{n}}} (I :_{K} J) = I_{*_{n}},\] where the second equality holds since $*_{n}$ is a stable semistar operation (\cite[Remark 2.9]{FH} and \cite[Theorem 2.10]{FH}). Thus we've shown that $\Psi(\Psi^{-1}(\mathcal{C})) = \mathcal{C}$. On the other hand, suppose that $\nu$ is an ideal valuation on $R$. We must show that $\nu = \nu_{\mathcal{C}_{\nu}}$. So suppose that $I \in \mathcal{S}(R)$ and that $\nu(I) = t$ for some $t \in \overline{\mathbb{N}}$. Then we have that $I \in G(\nu,k)$ if and only if $k \leq t$, which means that $I_{\nu_{k}} = R_{\nu_{k}}$ if and only if $k \leq t$, so that $\nu_{\mathcal{C}_{\nu}}(I) = t$. Thus $\Psi^{-1}(\Psi(\nu)) = \nu$.
\end{proof}

Let $h$ be a prime valuation on $R$, and for each $n \geq 0$, let 
\[ \Delta_{h_{n}} = \{ P \in \text{Spec}(R) \mid h(P) \leq n  \}.  \] Then we have a family of spectral semistar operations $\mathcal{C}_{h} = \{*_{h_{n}}\}^{\infty}_{n=0}$, where $*_{h_{n}} := *_{\Delta_{h_{n}}}$. Also, $*_{h_{0}} \geq *_{h_{1}} \geq *_{h_{2}} \geq \cdots$, since it's clear that $\Delta_{h_{0}} \subseteq \Delta_{h_{1}} \subseteq \Delta_{h_{2}} \subseteq \ldots$ \\

By a \textbf{countable descending chain of spectral semistar operations}, we mean a  family $\mathcal{C} = \{ *_{k}\}^{\infty}_{k=0}$ of spectral semistar operations where $*_{k} \geq *_{k+1}$ for each $k \geq 0$. From such a family, define a function $h_{\mathcal{C}} \colon \text{Spec}(R) \to \overline{\mathbb{N}}$ by $h_{\mathcal{C}}(0) = 0$, and for $P \in \text{Spec}(R) - \{0\}$, \[ h_{\mathcal{C}}(P) = \inf \{ k \mid P^{*_{k}} \neq R^{*_{k}}   \}.  \] It's easy to see that $h_{\mathcal{C}}$ is a prime valuation on $R$. Indeed, suppose $P \subseteq Q$ and let $t = h_{\mathcal{C}}(Q)$. Then $P^{*_{t}} \subseteq Q^{*_{t}} \neq R^{*_{t}}$, so that $h_{\mathcal{C}}(P) \leq t$.    

We will next establish a bijection between prime valuations and countable descending chains of spectral semistar operations on $R$. First, we require some notation: For a semistar operation $*$ on $R$, let \[  \Pi^{*}=\{P \in \Spec(R)-\{0\} \mid P^{*}\neq R^{*}\}.\]

\begin{lemm}
\label{qu}
\cite[Remark 4.9]{FH}
Let $R$ be a domain, If $*$ is spectral, then $\Pi^{*} \neq \varnothing$ and $I^{*}=\cap\{IR_{P} \mid P \in \Pi^{*}\}$ for each $I \in \overline{F}(R)$.
\end{lemm}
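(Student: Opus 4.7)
The plan is to show, given any spectral representation $* = *_\Delta$, that $\Pi^*$ coincides with the set of nonzero primes dominated by some element of $\Delta$, and then use this identification to rewrite the defining intersection in terms of $\Pi^*$. A preliminary normalization is to discard $(0)$ from $\Delta$ if it occurs there, since $IR_{(0)} = K$ is neutral in the intersection for $I \in \overline{F}(R)$; we may therefore assume every $Q \in \Delta$ is a nonzero prime. We also tacitly take $\Delta \neq \varnothing$, the alternative being the degenerate $e$-operation (for which $\Pi^* = \varnothing$ and the conclusion is vacuous anyway).

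Nonemptiness of $\Pi^*$ is then immediate: for any $Q \in \Delta$ we have $Q^* = \bigcap_{Q' \in \Delta} QR_{Q'} \subseteq QR_Q \subsetneq R_Q$, while $1 \in R \subseteq R^*$, so $Q^* \neq R^*$ and $Q \in \Pi^*$. The central step is the characterization: for a nonzero prime $P$ of $R$, one has $P \in \Pi^*$ if and only if $P \subseteq Q$ for some $Q \in \Delta$. Unwinding the definition, $P^* \neq R^*$ is equivalent to $1 \notin P^*$, which is equivalent to $1 \notin PR_Q$ for some $Q \in \Delta$, and this in turn is equivalent to $P \cap (R \setminus Q) = \varnothing$, i.e.\ $P \subseteq Q$. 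In particular $\Delta \subseteq \Pi^*$.

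From here the formula $I^* = \bigcap_{P \in \Pi^*} IR_P$ drops out in two lines. The inclusion $\bigcap_{P \in \Pi^*} IR_P \subseteq \bigcap_{Q \in \Delta} IR_Q = I^*$ follows because $\Delta \subseteq \Pi^*$. Conversely, each $P \in \Pi^*$ is contained in some $Q \in \Delta$, whence $R_Q \subseteq R_P$ and hence $I^* \subseteq IR_Q \subseteq IR_P$; intersecting over all $P \in \Pi^*$ gives the other inclusion. The main subtlety, and the step I would be most careful about in the writeup, is the normalization that permits assuming $0 \notin \Delta$ without altering $*$ on $\overline{F}(R)$; once that is in place, the rest is straightforward set-theoretic manipulation of localizations.
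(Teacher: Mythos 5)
Your argument is correct, and it is genuinely different from what the paper does: the paper offers no proof at all for Lemma \ref{qu}, which is simply quoted from \cite[Remark 4.9]{FH}, so a self-contained verification like yours is precisely what a reader would otherwise have to reconstruct from the source. The skeleton is sound: discarding $(0)$ from $\Delta$ is harmless because $IR_{(0)}=K$ for every $I\in\overline{F}(R)$; for a nonzero prime $P$ one has $1\in PR_{Q}$ iff $P\not\subseteq Q$, which gives both $\Delta\subseteq\Pi^{*}$ (hence $\Pi^{*}\neq\varnothing$ once $\Delta$ contains a nonzero prime) and the identification $\Pi^{*}=\{P\in\Spec(R)\setminus\{0\}\mid P\subseteq Q\text{ for some }Q\in\Delta\}$; and the two inclusions follow from $\Delta\subseteq\Pi^{*}$ in one direction and from $R_{Q}\subseteq R_{P}$ whenever $P\subseteq Q$ in the other. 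Two small points. First, your dismissal of the degenerate case is misstated: if $\Delta\subseteq\{(0)\}$, i.e.\ $*=e$ (which \emph{is} spectral under this paper's definition), then $\Pi^{*}=\varnothing$, so the conclusion ``$\Pi^{*}\neq\varnothing$'' is false there, not vacuous; only the intersection formula survives, and only under the convention that an empty intersection of submodules of $K$ is $K$. So what you actually prove is the lemma for spectral operations induced by a $\Delta$ containing at least one nonzero prime (equivalently $*\neq e$), which is how it is used later in the paper, and it would be cleaner to say exactly that than to appeal to vacuity. Second, the step ``$P^{*}\neq R^{*}$ iff $1\notin P^{*}$'' deserves its one-line justification (if $1\in P^{*}$ then $PR_{Q}=R_{Q}$ for every $Q\in\Delta$, whence $P^{*}=\bigcap_{Q\in\Delta}R_{Q}=R^{*}$); as written it is asserted rather than argued, though the fix is immediate.
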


\begin{theorem}
There is a bijective correspondence $\Phi$ from the set of prime valuations on $R$ to the set of countable descending chains of spectral semistar operations on $R$, given by $\Phi(h) = \mathcal{C}_{h}$, with inverse map $\Phi^{-1}(\mathcal{C}) = h_{\mathcal{C}}$.
\end{theorem}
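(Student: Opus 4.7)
The plan is to verify that $\Phi$ and $\Phi^{-1}$ are mutually inverse. The paper has already checked that $\mathcal{C}_h$ is a descending chain of spectral semistar operations and that $h_{\mathcal{C}}$ is a prime valuation, so the remaining work is to establish $h_{\mathcal{C}_h} = h$ for each prime valuation $h$, and $*_{h_{\mathcal{C},n}} = *_n$ for each $n \geq 0$ and each descending chain $\mathcal{C} = \{*_n\}_{n \geq 0}$.

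For $h_{\mathcal{C}_h} = h$, the key auxiliary fact I would establish first is that for any $\Delta \subseteq \Spec(R)$ and any nonzero prime $P$, $P^{*_\Delta} \neq R^{*_\Delta}$ if and only if $P \subseteq Q$ for some $Q \in \Delta$. The forward direction uses that $1 \in \bigcap_{Q' \in \Delta} R_{Q'}$ while $1 \notin PR_Q$ whenever $P \subseteq Q \in \Delta$; the converse is immediate since $P \not\subseteq Q$ for all $Q \in \Delta$ forces $PR_Q = R_Q$ throughout. Applied to $\Delta = \Delta_{h_k}$ and combined with the monotonicity of $h$ (so that $P \subseteq Q$ with $h(Q) \leq k$ implies $h(P) \leq k$, while conversely $h(P) \leq k$ makes $Q = P$ a witness), this gives $P^{*_{h_k}} \neq R^{*_{h_k}}$ exactly when $h(P) \leq k$. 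Taking infima over $k$ yields $h_{\mathcal{C}_h}(P) = h(P)$; the edge cases $P = 0$, $h(P) = 0$, and $h(P) = \infty$ fall out of the definitions.

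For $*_{h_{\mathcal{C},n}} = *_n$, I would exploit Lemma \ref{qu}: both operations are spectral, so equality follows from showing that they have the same $\Pi$-set. Concretely I would prove $\Pi^{*_n} = \Delta_{h_{\mathcal{C},n}} \setminus \{0\}$, after which $*_n = *_{\Pi^{*_n}} = *_{\Delta_{h_{\mathcal{C},n}}} = *_{h_{\mathcal{C},n}}$, the middle equality because removing $\{0\}$ from $\Delta$ does not affect the associated spectral operation (since $IR_{(0)} = K$ for every nonzero $I \in \overline{F}(R)$). The inclusion $\Pi^{*_n} \subseteq \Delta_{h_{\mathcal{C},n}} \setminus \{0\}$ is immediate, since any $P$ with $P^{*_n} \neq R^{*_n}$ witnesses $k = n$ in the infimum defining $h_{\mathcal{C}}(P)$. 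For the reverse, given $P \in \Delta_{h_{\mathcal{C},n}} \setminus \{0\}$, choose $k \leq n$ with $P^{*_k} \neq R^{*_k}$. Since the chain is descending, $*_k \geq *_n$, and one has the general semistar identity $(I^{*_n})^{*_k} = I^{*_k}$, proved in two lines from the double containment $I^{*_k} \subseteq (I^{*_n})^{*_k} \subseteq (I^{*_k})^{*_k} = I^{*_k}$. Hence $P^{*_n} = R^{*_n}$ would force $P^{*_k} = R^{*_k}$, contradicting the choice of $k$.

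The main obstacle I anticipate is the second direction. The chain $\{*_n\}$ is descending as operations, but this does not translate directly into a containment relation on the $\Pi$-sets, so one needs the identity $(I^{*_n})^{*_k} = I^{*_k}$ (valid whenever $*_k \geq *_n$) to transport the nontriviality of $*_k$ at $P$ back to $*_n$. The bookkeeping around the zero ideal, which lives in $\Delta_{h_{\mathcal{C},n}}$ but not in $\Pi^{*_n}$, is minor since $R_{(0)} = K$ makes it invisible to any spectral intersection.
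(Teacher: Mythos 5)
Your proposal is correct and takes essentially the same route as the paper: you verify $h_{\mathcal{C}_h}=h$ by observing that $P^{*_{\Delta}}\neq R^{*_{\Delta}}$ precisely when $P$ is contained in some member of $\Delta$ (combined with monotonicity of $h$), and you verify $*_{(h_{\mathcal{C}})_k}=*_k$ by proving $\Delta_{(h_{\mathcal{C}})_k}=\Pi^{*_k}\cup\{0\}$ and invoking Lemma \ref{qu}, exactly as the paper does. The only blemish is cosmetic: the arguments you attach to the ``forward'' and ``converse'' directions of your auxiliary equivalence are interchanged (the $1\notin PR_{Q}$ argument proves that containment forces $P^{*_\Delta}\neq R^{*_\Delta}$, not the other implication), but both directions are present, and your explicit identity $(I^{*_n})^{*_k}=I^{*_k}$ merely spells out what the paper leaves implicit in its step ``Since $*_t\geq *_k$, we have $P^{*_k}\neq R^{*_k}$.''
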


\begin{proof}
Let $h$ be a prime valuation on $R$. We must check that $h = h_{\mathcal{C}_{h}}$. That is, we must show that for all $P \in \text{Spec}(R)$, \[ h(P) = h_{\mathcal{C}_{h}}(P) = \inf \{ k \mid P^{*_{h_{k}}}  \neq R^{*_{h_{k}}}     \}.       \]

Say $t = h(P)$. Now if $k < t$, $P \not\subseteq Q$ for any $Q \in \Delta_{h_{k}}$, since otherwise we would have $t = h(P) \leq h(Q) \leq k$. So, when $k < t$, \[ P^{*_{h_{k}}} = \bigcap_{Q \in \Delta_{h_{k}}} PR_{Q} = \bigcap_{Q \in \Delta_{h_{k}}} R_{Q} = R^{*_{h_{k}}}.      \] Thus $h_{\mathcal{C}_{h}}(P) \geq t$.  If $k = t$, then \[ P^{*_{h_{t}}} = \bigcap_{Q \in \Delta_{h_{t}}} PR_{Q} \subseteq PR_{P},       \] since $P \in \Delta_{h_{t}}$, so that since $1 \notin P^{*_{h_{t}}}$ we have that $P^{*_{h_{t}}} \neq R^{h_{*_{t}}}$, and thus $h_{\mathcal{C}_{h}}(P) = t = h(P)$. So $\Phi^{-1}\Phi(h) = h$ for any prime valuation $h$. \\

Conversely, suppose $\mathcal{C} = \{ *_{k} \}^{\infty}_{k=0}$ is a countable descending of spectral semistar operations. We must show that $\mathcal{C} = \mathcal{C}_{h_{\mathcal{C}}}$. That is, for each $k \geq 0$ we need to show $*_{k} = *_{(h_{\mathcal{C}})_{k}}$. So suppose $E \in \overline{F}(R)$. By Lemma \ref{qu}, we have that \[ E^{*_{k}} = \bigcap_{Q \in \Pi^{*_{k}}} ER_{Q},         \] while \[ E^{*_{(h_{\mathcal{C}})_{k}}} = \bigcap_{P \in \Delta_{(h_{\mathcal{C}})_{k}}} ER_{P} \text{ and } \Delta_{(h_{\mathcal{C}})_{k}} = \{ P \in \text{Spec}(R) \mid h_{\mathcal{C}}(P) \leq k \}.     \]

If $P \in \Delta_{(h_{\mathcal{C}})_{k}}$, then $h_{\mathcal{C}}(P) \leq k$. So $P^{*_{t}} \neq R^{*_{t}}$ for some $t \leq k$. Since $*_{t} \geq *_{k}$, we have that $P^{*_{k}} \neq R^{*_{k}}$. Hence $P \in \Pi^{*_{k}}$, and we can conclude that $\Delta_{(h_{\mathcal{C}})_{k}} \subseteq \Pi^{*_{k}}$.  On the other hand, if $P \in  \Pi^{*_{k}}$ and $P \neq 0$, then $P^{*_{k}} \neq R^{*_{k}}$, so $h_{\mathcal{C}}(P) \leq k$ and $P \in \Delta_{(h_{\mathcal{C}})_{k}}$. Thus $\Delta_{(h_{\mathcal{C}})_{k}}=\Pi^{*_{k}} \cup \{0\}$, and since $ER_{(0)} = K$ for any $E \in \overline{F}(R)$, we have $*_{k}=*_{(h_{\mathcal{C}})_{k}}$.
We conclude that for all $E \in \overline{F}(R)$, $E^{*_{k}} = E^{*_{(h_{\mathcal{C}})_{k}}}$, and thus $\Phi\Phi^{-1}(\mathcal{C}) = \mathcal{C}$.
\end{proof}

We'll say that an ideal valuation $\nu$ is \textbf{constant on proper ideals} of $R$ if there is some $c \in \overline{\mathbb{N}}$ so that  for any $I \in \mathcal{S}'(R)$, $\nu(I) = c$. Similarly, say that a prime valuation $h$ is \textbf{constant on nonzero prime ideals} of $R$ if there is some $d \in \overline{\mathbb{N}}$ so that for any nonzero prime ideal $P$ of $R$, $h(P) = d$.

\begin{theorem}{\label{qlocal}}
Let $R$ be an integral domain that is not a field. Then the following are equivalent.
\begin{enumerate}
\item $R$ is a one-dimensional quasi-local domain.
\item $\tilde{*}=d$ for every semistar operation $* \neq e$ on $R$.
\item If $*$ is a semistar operation on $R$ that is both stable and of finite type, then either $*=e$ or $*=d$.
\item Every ideal valuation on $R$ is constant. 
\item Every prime valuation on $R$ is constant and at least one ideal valuation takes a nonzero value.
\end{enumerate}
\end{theorem}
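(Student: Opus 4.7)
My plan is to prove the equivalence along the cycle $(1) \Rightarrow (4) \Rightarrow (3) \Rightarrow (2) \Rightarrow (1)$, together with a direct verification of $(1) \Leftrightarrow (5)$.

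For $(1) \Rightarrow (4)$, let $(R,M)$ be $1$-dimensional quasi-local and $\nu$ an ideal valuation. Since $\dim R = 1$, every nonzero proper ideal has radical $M$, so for any finitely generated nonzero $I, J \subseteq M$ some power $J^{N} \subseteq I$; Lemmas \ref{equality} and \ref{monotonicity} then give $\nu(J) = \nu(J^{N}) \leq \nu(I)$, and by symmetry $\nu(I) = \nu(J) =: c$. Property (IV3) spreads this common value $c$ to every nonzero proper ideal. For $(4) \Rightarrow (3)$, write a stable, finite-type $*$ as $*_{\mathcal{F}^{*}}$ and form the ideal valuation $\nu_{\mathcal{F}^{*}}$ of Example \ref{locex}; its values on $\mathcal{S}'(R)$ are $0$ or $1$ according as an ideal lies outside or inside $\mathcal{F}^{*}$, and constancy forces $\mathcal{F}^{*}$ to be either $\{R\}$ or $\mathcal{S}(R) \setminus \{0\}$, so $* \in \{d, e\}$.

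For $(3) \Rightarrow (2)$, note that $\tilde{*}$ is stable and of finite type, hence by (3) equals $e$ or $d$; the inequality $\tilde{*} \leq *$, obtained from the observation that $J^{*_{f}} = R^{*}$ forces $J^{*} = R^{*}$ and thus $(I :_{K} J) \subseteq I^{*}$, rules out $\tilde{*} = e$ when $* \neq e$, leaving $\tilde{*} = d$. For $(2) \Rightarrow (1)$, I apply (2) to the spectral operation $*_{\{P\}} \colon I \mapsto IR_{P}$ attached to any nonzero prime $P$. Its associated localizing system $\{I : I \not\subseteq P\}$ is visibly of finite type, so $*_{\{P\}}$ is stable and of finite type and therefore equals its own stabilization $\widetilde{*_{\{P\}}}$. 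Since $R_{P} \neq K$, we have $*_{\{P\}} \neq e$, and (2) yields $*_{\{P\}} = d$, i.e., $R_{P} = R$. This means $P$ contains every non-unit of $R$, forcing $P$ to be the unique maximal ideal; hence every nonzero prime of $R$ equals $P$, $\Spec(R) = \{0, P\}$, and since $R$ is not a field, $R$ is $1$-dimensional quasi-local.

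Finally, $(1) \Rightarrow (5)$ is immediate: $\Spec(R) = \{0, M\}$ makes every prime valuation trivially constant on nonzero primes, and the ideal valuation $\widetilde{h}$ attached to $h(M) = 1$ takes value $1$ on $M$. For the converse, the presence of either a chain $0 \subsetneq P \subsetneq Q$ of nonzero primes or of two distinct maximal ideals $M_{1} \neq M_{2}$ supplies an explicit non-constant prime valuation (value $1$ on primes not contained in $P$, respectively on $M_{1}$ alone), contradicting (5). The step I expect to be the main obstacle is $(2) \Rightarrow (1)$: identifying the right family of test operations—here the spectral $*_{\{P\}}$—is what unlocks the structural conclusion about $\Spec(R)$, while $(3) \Rightarrow (2)$ also leans on the routine but careful verification of $\tilde{*} \leq *$.
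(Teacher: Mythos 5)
Your proof is correct, but it takes a genuinely different route from the paper's in the directions involving (1), (2) and (3). The paper disposes of $(1)\Leftrightarrow(2)$ by citing Picozza--Tartarone and of $(2)\Leftrightarrow(3)$ by the identity $*=\tilde{*}$ for stable finite-type operations, and then runs $(3)\Rightarrow(4)\Rightarrow(2)$ through the chain machinery ($G(\nu,n)$, Theorem \ref{bijective}, and the valuation $\nu_{\mathcal{F}^{*_f}}$ of Example \ref{locex}); you instead close the cycle $(1)\Rightarrow(4)\Rightarrow(3)\Rightarrow(2)\Rightarrow(1)$ with self-contained arguments: a radical/power argument (via Lemmas \ref{monotonicity} and \ref{equality}) gives $(1)\Rightarrow(4)$ directly, your $(4)\Rightarrow(3)$ is essentially the paper's $(4)\Rightarrow(2)$ step with $\nu_{\mathcal{F}^*}$ in place of $\nu_{\mathcal{F}^{*_f}}$, and your $(2)\Rightarrow(1)$ replaces the external citation by testing (2) against the spectral operations $*_{\{P\}}\colon E\mapsto ER_P$ for nonzero primes $P$, which is arguably the more illuminating argument since it exposes why one-dimensional quasi-local is forced. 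Two small points to tighten: in $(4)\Rightarrow(3)$ you should note that $\mathcal{F}^*$ is of finite type (Theorem \ref{FonHuc}(2)) before invoking Example \ref{locex}, since Remark \ref{cant} shows this hypothesis is not removable; and in $(2)\Rightarrow(1)$ the inference ``the associated localizing system is of finite type, so $*_{\{P\}}$ is stable and of finite type'' is not a valid implication in general---what you should say is that $*_{\{P\}}=*_{\mathcal{F}}$ for $\mathcal{F}=\{I\mid I\not\subseteq P\}=\mathcal{F}_S$ with $S=\{sR\mid s\in R\setminus P\}$ (Lemma \ref{1} and the lemma following it), whence stability and finite type follow from the paper's Section 2; with that rephrasing the step is airtight. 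Your treatment of $(1)\Leftrightarrow(5)$ matches the paper's in spirit (explicit test prime valuations), with different test functions, and you additionally verify the ``at least one ideal valuation takes a nonzero value'' clause via the induced valuation $\widetilde{h}$, a point the paper's proof of $(1)\Rightarrow(5)$ passes over silently.
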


\begin{proof}
$(1) \Leftrightarrow (2)$: See \cite[Theorem 2.8]{PT}. $(2) \Leftrightarrow (3)$: This follows from the fact that $*=\tilde{*}$ if and only if $*$ is stable and of finite type (\cite[Corollary 3.9]{FH}).\\
\\
$(3) \Rightarrow (4)$: Suppose that $d$ and $e$ are the only stable and of finite type semistar operations of $R$. Then by Lemma \ref{basic arithmetic star}, given an ideal valuation $\nu$ of $R$, either $\nu_{n}=e$ for all $n \geq 0$, or there exists $r \geq 0$ such that $\nu_{n}=e$ for all $0 \le n\le r$ and $\nu_{n}=d$ for all $n>r$. Now suppose that $\nu_{n}=e$ for all $n \geq 0$. Then $\nu(I)=\infty$ for all $I \in \mathcal{S}(R) \setminus \{0\}$. Indeed, by Theorem \ref{bijective}, 
$G(\nu, n) = \{ J \in \mathcal{S}(R) \mid \nu_{n}(J) \geq n  \} = \{ J \in \mathcal{S}(R) \mid J_{v_{n}}  = R_{v_{n}}    \}=\mathcal{S}(R)$,
 so $\nu(I) \ge n$ for any $I \in \mathcal{S}'(R)$. Since this inequality holds for arbitrary $n$, $\nu(I)=\infty$. Thus $\nu$ is constant on proper ideals. On the other hand, suppose that $\nu_{r}=d$ for some $r \geq 1$ with $r$ chosen minimally. Now by the above argument, $\nu(I)\ge r$ for all $I \in \mathcal{S}'(R)$. Assume that $\nu(I)>r$. Then $I \in G(\nu,r+1)$, so that $I_{\nu_{r+1}}=R_{\nu_{r+1}}$, but since $\nu_{r+1}=d$ by assumption, $I=R$. Thus $\nu(I)=r$ for all $I \in \mathcal{S'}(R)$, meaning  $\nu$ is constant on proper ideals.\\
\\
$(4) \Rightarrow (2)$: Suppose that every ideal valuation on $R$ is constant on proper ideals. Let $* \neq e$ be a semistar operation on $R$. Then $\mathcal{F}^{*_{f}}=\{I \in \mathcal{S}(R) \mid I^{*_{f}}=R^{*}\}$ is a localizing system of finite type by Theorem \ref{FonHuc}. Set $\nu=\nu_{\mathcal{F}^{*_{f}}}$, as defined in Example \ref{locex}. Now choose nonzero $x \in R$ such that $x$ is not a unit of $R^*$. Then $(xR)^{*_{f}}=xR^{*_{f}}\neq R^{*}$, so $\nu(xR)=0$ and thus $\nu(I)=0$ for all $I \in \mathcal{S'}(R)$ by assumption. So $\mathcal{F}^{*_{f}}=\{R\}$ and $I^{\tilde{*}}=I_{\mathcal{F}^{*_{f}}}=I:_{K}R=I$ for all $I \in \overline{F}(R)$, and hence  $\tilde{*}=d$. \\ 
\\
 $(1) \Rightarrow (5)$: This is immediate, since in this case $\text{Spec}(R)- \{0\}$ consists of a single element, so any prime valuation function $h$ is automatically constant on nonzero prime ideals of $R$.  
 
 \newpage $(5) \Rightarrow (1)$: Suppose that every prime valuation on $R$ is constant on nonzero prime ideals of $R$. Fix $Q \in \text{Spec}(R) - \{0\}$, and define $h \colon \text{Spec}(R) \to \overline{\mathbb{N}}$ by \[ h(P) = 
  \begin{cases}
  0 &\text{ if } Q = 0 \\
  \infty &\text{ if }  P \in V(Q) \\
  1 &\text{otherwise}
  \end{cases}
  \] It's easy to see that $h$ is a prime valuation, so that we must have $V(Q) = \Spec(R) - \{0\}$. Since this holds for any nonzero prime ideal $Q$, we must have that $|\Spec(R)| = 2$, meaning $R$ is $1$-dimensional quasi-local.
 \end{proof}

\section{Polynomial grade and height}

Recall that we say that a sequence $\textbf{a} = a_{1},\ldots, a_{r}$ of elements of an arbitrary ring $R$ is a 
\textbf{weak R-sequence} if for each $i = 1,\ldots, r$,  $a_{i} \notin Z (R/(a_{1},\ldots, a_{i-1}))$, where we write $Z(M)$ to denote the set of zerodivisors of an $R$-module $M$. 
 For an ideal $I$ of $R$, we let $\text{grade}(I)$ denote the supremum over 
all lengths of weak $R$-sequences that are contained in $I$. This is also called the 
\textbf{classical grade} of $I$ on $R$. In the non-Noetherian setting, classical grade behaves strangely: there are ideals $I$ that consist of zerodivisors, yet $(0 :_{R} I) = 0$. Passing to the polynomial ring $R[x]$ can resolve this issue, laying outing out the following notion: The \textbf{polynomial grade} of an ideal $I$ of $R$, denoted by $\text{p.grade}(I)$, 
is the value \[ \text{p.grade}(I) = \lim_{m \to \infty} \text{grade}_{R[X_{1},\ldots, X_{m}]}\Big(I[X_{1},\ldots,X_{m}]\Big),\] where $R[X_{1},\ldots, X_{m}]$ denotes the polynomial ring in $m$ variables over $R$.

\begin{lemm}
\label{pgradeproperties}
Let $I$ be an ideal of an arbitrary ring $R$. Then the following hold:
\begin{enumerate}
\item $\text{grade}_{R}(I) \leq \text{p.grade}_{R}(I)$. If $R$ is Noetherian and $I$ is proper, then we have equality.
\item If $I \neq R$, then there is a prime $\mathfrak{p} \in V(I)$ so that $\text{p.grade}(\mathfrak{p}) = \text{p.grade}(I).$ Moreover, $\text{p.grade}(I) = \text{p.grade}(\sqrt{I})$.
\item Let $M$ be an $R$-module and $f \in R[X_{1},\ldots, X_{m}]$ and suppose $I$ is generated by the coefficients of $f$. Then $0 :_{M} I = 0 \Leftrightarrow$ $f$ is a non-zerodivisor on $M[X_{1},\ldots,X_{n}]$.
\item If $J$ is another ideal with $J \subseteq I$, then $\text{p.grade}(J) \leq \text{p.grade}(I)$.
\item $\text{p.grade}(I) = \sup \{ \text{p.grade}(J) \mid J \subseteq I, J \in f\mathcal{S}(R) \}$.
\end{enumerate}
\end{lemm}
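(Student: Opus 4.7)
The plan is to establish the five parts in the order $(3), (4), (1), (5), (2)$, since the later parts lean on the earlier ones.

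Part $(3)$ is McCoy's theorem with coefficients in a module. The ``$\Leftarrow$'' direction is immediate: any nonzero $m \in (0 :_M I)$ satisfies $mf = 0$ in $M[X_1,\ldots,X_n]$. For ``$\Rightarrow$'', I would suppose $fg = 0$ for some nonzero $g \in M[X_1,\ldots,X_n]$, fix a monomial order, and pick $g$ of minimal total degree. For each coefficient $c$ of $f$, the product $cg$ either vanishes or has strictly smaller leading monomial than $g$, so by minimality $cg = 0$; iterating shows the leading coefficient of $g$ is annihilated by every coefficient of $f$, hence by $I$, giving a witness to $(0 :_M I) \neq 0$. Part $(4)$ is direct from the definition: a weak $R$-sequence in $J$ is also a weak $R$-sequence in $I$, and tensoring with $R[X_1,\ldots,X_m]$ preserves this, so taking the limit yields $\text{p.grade}(J) \leq \text{p.grade}(I)$.

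For $(1)$, the inequality $\text{grade}_R(I) \leq \text{p.grade}_R(I)$ is the $m = 0$ case of the supremum defining $\text{p.grade}$. In the Noetherian, proper case, equality is the classical statement that grade is invariant under polynomial extension: any maximal weak $R$-sequence in $I[X_1,\ldots,X_m]$ can be replaced by one in $I$ of the same length, using that associated primes behave well under the faithfully flat extension $R \to R[X_1,\ldots,X_m]$. For $(5)$, the ``$\geq$'' direction is $(4)$. For ``$\leq$'' (finite case), suppose $\text{p.grade}(I) \geq n$; then for some $m$ there is a weak $R[X_1,\ldots,X_m]$-sequence $f_1,\ldots,f_n \in I[X_1,\ldots,X_m]$. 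Each $f_i$ involves only finitely many coefficients drawn from $I$, so letting $J$ be the $R$-ideal generated by all these coefficients gives a finitely generated $J \subseteq I$ with $f_1,\ldots,f_n \in J[X_1,\ldots,X_m]$, whence $\text{p.grade}(J) \geq n$. The case $\text{p.grade}(I) = \infty$ is handled by letting $n \to \infty$.

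Part $(2)$ is the main obstacle. For $\text{p.grade}(I) = \text{p.grade}(\sqrt{I})$, the ``$\leq$'' direction is $(4)$; for ``$\geq$'', I would reduce via $(5)$ to finitely generated $I$ and use the McCoy-style characterization in $(3)$ to argue that a weak sequence in $\sqrt{I}[X_1,\ldots,X_m]$ can be adjusted, by raising each element to a sufficiently high power landing in $I[X_1,\ldots,X_m]$, to a weak sequence of the same length in $I[X_1,\ldots,X_m]$. For the existence of a prime $\mathfrak{p} \in V(I)$ with $\text{p.grade}(\mathfrak{p}) = \text{p.grade}(I)$: if $\text{p.grade}(I) = \infty$ this is trivial by $(4)$; otherwise, with $n = \text{p.grade}(I) < \infty$, the essential content is that the infimum $\inf_{\mathfrak{p} \in V(I)} \text{p.grade}(\mathfrak{p})$ equals $n$ and is attained. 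I would invoke the Koszul-homology characterization of polynomial grade together with a Zorn-type argument on descending chains of primes containing $I$, citing standard references on non-Noetherian grade. This attainment step is the delicate point, since in a non-Noetherian ring descending chains of primes need not have a prime lower bound, so the argument must pass through the Koszul (or $\Ext$-vanishing) characterization where limits behave predictably.
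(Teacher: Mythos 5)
Most of your sketch is sound, and it is worth noting that the paper itself offers no argument at all: it simply cites Northcott, \emph{Finite Free Resolutions}, Chapter 5, where all five items are proved. Your treatments of (3), (4), (1) and (5) are the standard arguments and are correct in outline, and your power-trick for $\text{p.grade}(I)=\text{p.grade}(\sqrt{I})$ also works, provided you add the (Noetherian-free, but not stated by you) fact that replacing the members of a weak sequence by powers again yields a weak sequence; note that you do not actually need (3) or the reduction via (5) for this step.

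The genuine gap is in the first assertion of (2), and your diagnosis of where the difficulty lies is off. By monotonicity (4), every $\mathfrak{p}\in V(I)$ satisfies $\text{p.grade}(\mathfrak{p})\ge \text{p.grade}(I)$, so the whole content is to \emph{produce} a prime over $I$ with $\text{p.grade}(\mathfrak{p})\le \text{p.grade}(I)$; ``attainment of the infimum'' is automatic, since $\{\text{p.grade}(\mathfrak{p}) \mid \mathfrak{p}\in V(I)\}$ is a nonempty subset of $\overline{\mathbb{N}}$ and hence has a least element. Moreover, your stated obstruction is false: a descending chain of primes always has a prime lower bound, namely its intersection, so minimal primes over $I$ exist in any ring; the problem is that nothing you have said shows that any of them (or any prime over $I$ at all) has polynomial grade as small as $\text{p.grade}(I)$. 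The standard proof (this is what Northcott does) runs the Zorn argument in the opposite direction: by (5), the set of ideals $J\supseteq I$ with $\text{p.grade}(J)\le n=\text{p.grade}(I)$ is closed under unions of chains, so it has a maximal element, and one shows a maximal element is prime. That last step needs the inequality $\text{p.grade}(J_{1}J_{2})\ge \min\{\text{p.grade}(J_{1}),\text{p.grade}(J_{2})\}$ (applied to $J+aR$ and $J+bR$ when $ab\in J$), which you never establish and which cannot be borrowed from the paper, since the paper's corollary deduces exactly this (IV2) property of $\text{p.grade}$ \emph{from} part (2). So either supply that inequality independently (it can be proved via the characterization of polynomial grade after adjoining indeterminates, or via Koszul/Ext vanishing) before running the ascending Zorn argument, or do as the paper does and cite Northcott for (2) outright.
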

\begin{proof}
These statements, along with their proofs, can all be found in \cite[Chapter 5]{Nor}.
\end{proof}

\begin{coro}
Let $R$ be a domain. Then $\text{p.grade} \colon \mathcal{S}(R) \to \overline{\mathbb{N}}$ is an ideal valuation on $R$.
\end{coro}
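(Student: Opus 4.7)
The plan is to verify the three axioms (IV1)--(IV3) directly using Lemma \ref{pgradeproperties}. For (IV1), the only element of the zero ideal is $0$, which is a zerodivisor on $R[X_{1},\ldots,X_{m}]$ for every $m$, so no weak sequence of positive length sits inside $0[X_{1},\ldots,X_{m}]$ and hence $\text{p.grade}(0) = 0$. The value $\text{p.grade}(R) = \infty$ holds by the standard convention that the unit ideal has infinite grade (since $R/R = 0$ imposes no obstruction to extending a weak sequence). Property (IV3) is literally part (5) of Lemma \ref{pgradeproperties}, so nothing more is needed there.

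The substantive axiom is (IV2). The naive inequality coming from monotonicity (part (4)) goes the \emph{wrong} way: since $IJ \subseteq I \cap J$, one gets $\text{p.grade}(IJ) \leq \min\{\text{p.grade}(I),\text{p.grade}(J)\}$, which is the reverse of what (IV2) demands. The trick is to invoke part (2) to replace $IJ$ with a prime where the grade is attained. If $IJ = R$, then $I = J = R$ and every quantity equals $\infty$, so assume $IJ \neq R$. By part (2), pick a prime $\mathfrak{p} \in V(IJ)$ with $\text{p.grade}(\mathfrak{p}) = \text{p.grade}(IJ)$. Since $\mathfrak{p}$ is prime and contains $IJ$, it contains $I$ or $J$; without loss of generality $I \subseteq \mathfrak{p}$. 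Applying part (4) gives $\text{p.grade}(I) \leq \text{p.grade}(\mathfrak{p}) = \text{p.grade}(IJ)$, and hence $\min\{\text{p.grade}(I),\text{p.grade}(J)\} \leq \text{p.grade}(I) \leq \text{p.grade}(IJ)$, as required. (As a byproduct one also sees $\text{p.grade}(IJ) = \min\{\text{p.grade}(I),\text{p.grade}(J)\}$, matching Lemma \ref{equality}.)

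The main obstacle --- or rather the only subtle point --- is recognizing that (IV2) cannot be proved by monotonicity alone; one must route through a prime containing $IJ$ and use primeness to push the bound down onto either $I$ or $J$. Once that observation is made, all three axioms fall out of the properties collected in Lemma \ref{pgradeproperties}.
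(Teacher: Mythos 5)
Your proof is correct and follows essentially the same route as the paper: (IV1) and (IV3) come directly from the properties of polynomial grade, and (IV2) is obtained by choosing, via part (2) of the lemma, a prime $\mathfrak{p} \in V(IJ) = V(I) \cup V(J)$ attaining $\text{p.grade}(IJ)$ and then applying monotonicity to whichever of $I$, $J$ lies in $\mathfrak{p}$. The extra remarks (the explicit check of (IV1) and the observed equality $\text{p.grade}(IJ)=\min\{\text{p.grade}(I),\text{p.grade}(J)\}$) are fine but not a different argument.
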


\begin{proof}
(IV1) and (IV3) follow immediately from the above lemma. Now if $I,J \in \mathcal{S}(R)$ and $t = \text{p.grade}(IJ)$, then there is some prime ideal $\mathfrak{p}$ of $R$ so that $\mathfrak{p} \in V(IJ) = V(I) \cup V(J)$ and $\text{p.grade}(\mathfrak{p})  = t$. Thus $\text{p.grade}(I) \leq t$ or $\text{p.grade}(J) \leq t$, and so (IV2) holds.
\end{proof}

Thus by Theorem \ref{bijective}, the ideal valuation p.grade yields a family $\{ (\text{p.grade})_{n} \}^{\infty}_{n=0}$ of finite type semistar operations on $R$. We next relate the $(\text{p.grade})_{n}$ operations to some familiar semistar operations for small values of $n$. First, we need a preparatory lemma.

\begin{lemm}Let $J$ be a finitely generated ideal in a domain $R$. Then $J^{-1} = R$ if and only if $\text{p.grade}(J) \geq 2.$
\end{lemm}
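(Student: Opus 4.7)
The plan is to prove both implications by means of Lemma~\ref{pgradeproperties}(3), which translates the module-annihilator condition $0 :_{M} J = 0$ into a non-zerodivisor statement on a polynomial ring. Write $J = (a_1, \ldots, a_n)$ (we may assume $J \ne 0$, otherwise $J^{-1} = K$) and introduce the auxiliary polynomial $f = a_1 + a_2 X_1 + \cdots + a_n X_{n-1} \in R[X_1, \ldots, X_{n-1}]$, whose coefficients generate $J$; this polynomial will play the role of a ``universal'' element of $J[X_1, \ldots, X_{n-1}]$ witnessing the non-zerodivisor condition.

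For the forward direction, I would assume $J^{-1} = R$, pick any nonzero $b \in J$, and first observe that $0 :_{R/bR} J = 0$: indeed, if $a \in R$ satisfies $aJ \subseteq bR$, then $a/b \in J^{-1} = R$ forces $a \in bR$. Lemma~\ref{pgradeproperties}(3) applied with $M = R/bR$ then says $f$ is a non-zerodivisor on $(R/bR)[X_1, \ldots, X_{n-1}]$, so $b, f$ is a weak $R[X_1, \ldots, X_{n-1}]$-sequence of length two that is entirely contained in $J[X_1, \ldots, X_{n-1}]$. This gives $\mathrm{grade}_{R[X_1,\ldots,X_{n-1}]}(J[X_1,\ldots,X_{n-1}]) \geq 2$, and therefore $\mathrm{p.grade}(J) \geq 2$.

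For the reverse direction, I would take a weak $S$-sequence $f_1, f_2$ contained in $J[X_1, \ldots, X_m]$ for some $m$, with $S := R[X_1, \ldots, X_m]$, and use it to pin down an arbitrary $x = a/b \in J^{-1}$, where $a, b \in R$ and $b \ne 0$. Because each $f_i$ has its coefficients in $J$ and $aJ \subseteq bR$, one gets $af_i \in bS$, so $af_i = b h_i$ for some $h_i \in S$. Then $b(h_1 f_2 - h_2 f_1) = a f_1 f_2 - a f_2 f_1 = 0$ in the domain $S$ forces $h_1 f_2 = h_2 f_1$; since $f_2$ is a non-zerodivisor on $S/(f_1)$, this yields $h_1 = g f_1$ for some $g \in S$, and cancelling the non-zerodivisor $f_1$ in $af_1 = b g f_1$ gives $a = bg$. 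Comparing coefficients in the equation $bg = a$ over the domain $R$ then forces $g$ to be a constant polynomial, so $g \in R$ and $x = g \in R$.

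The principal subtlety will be in the reverse direction: the cancellation chain hinges on the fact that $f_1$ and $f_2$ have \emph{coefficients} in $J$, not merely that they themselves lie in $JS$, since this is what guarantees each $af_i$ lies in $bS$ as a polynomial identity. Once $a = bg$ is secured in $S$, the descent to $g \in R$ is routine because $R$ is a domain and $b \ne 0$; and the forward direction, by contrast, is essentially a direct application of Lemma~\ref{pgradeproperties}(3) once the right element $b \in J$ has been chosen.
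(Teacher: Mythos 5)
Your proof is correct and takes essentially the same approach as the paper: the forward direction uses Lemma~\ref{pgradeproperties}(3) to exhibit the weak sequence $b,f$ in $J[X_1,\ldots,X_{n-1}]$ (in fact you streamline the paper's step of writing $J=Rb+I$ by letting the coefficients of $f$ generate all of $J$), and the reverse direction is the same cancellation argument with a weak sequence $f_1,f_2$ in $JS$, carried out for $a/b\in J^{-1}$ inside $K$ rather than for an element of $(JS)^{-1}$ inside the quotient field of $S$. Both variants are sound, so no changes are needed.
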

\begin{proof}
If $J = R$, the claim is trivial, so we can assume throughout that $J$ is proper. So if $J^{-1} = R$, then since $J$ is proper, it's not principal. Thus we may write $J = Rb + I$ for some 
non-zero $b \in J$, where $I \not\subseteq Rb$ and $I$ is finitely generated. So write $I = (a_{0},\ldots, a_{n})$,
and let $f = a_{0} + a_{1}X + \ldots + a_{n}X^{n} \in R[X]$. We claim that $b,f$ is a $R[X]$-sequence contained in $J[X]$. Clearly $b$ is $R[X]$-regular. Next, we claim that $f$ is a non-zerodivisor on $R[X]/bR[X] \cong (R/bR)[X]$. By Lemma~\ref{pgradeproperties}(3), this happens precisely when $(0 :_{R/bR} I) = 0$. So suppose $r + bR \in (0 :_{R/bR} I)$. Then $Ir + bR = bR$, so
$Ir \subset bR \Rightarrow I(r/b) \subset R$. Then $r/b \in J^{-1} = R$, or in other words, $r \in bR$. Hence we have 
$(0 :_{R/bR} I) = 0$, so that $b,f$ forms a $R[X]$-sequence in $J[X]$. Thus $\text{p.grade}(J) \geq 2$. Conversely, suppose $\text{p.grade}(J) \geq 2$, so that $\text{grade}_{S}(JS) \geq 2,$ where $S$ is a polynomial ring in finitely many variables over $R$.
If $\text{grade}_{S}(JS) \geq 2$, then $JS$ contains an $S$-sequence $f,g$. Now if 
$c \in (JS)^{-1}$, then $cf = t$ and $cg = s$ for some $s,t \in S$. Hence we have $sf = cfg = tg$, which means $t = uf$ for some $u \in S$ since $g$ is a non-zerodivisor on $R/fR$. Thus $cf = t = uf$, which means $c = u \in R$. So $(JS)^{-1} = S,$ and hence  $J^{-1} = R$. 
\end{proof}

\begin{theorem}\label{wfoundit}
Let $R$ be a domain. Then for $i \in \{0,1,2\}$ we have,  
\[ (\text{p.grade})_{i} = \begin{cases}
e, &\text{if } i=0,1.\\
w, &\text{if } i = 2. 
\end{cases}\]
\end{theorem}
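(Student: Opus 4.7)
The plan is to unravel the definition $I^{\nu_i} = \bigcup\{(I :_K J) \mid J \in f\mathcal{S}(R),\ \nu(J) \geq i\}$ for $\nu = \text{p.grade}$ and identify the index set $\widetilde{G}(\text{p.grade}, i)$ explicitly for each of $i = 0, 1, 2$. The case $i = 0$ is immediate from Lemma \ref{basic arithmetic star}(ii), which gives $\nu_0 = e$ for every ideal valuation.

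For $i = 1$, since $R$ is a domain every nonzero element is a non-zerodivisor, so any nonzero ideal $J$ satisfies $\text{grade}(J) \geq 1$; combined with Lemma \ref{pgradeproperties}(1) this yields $\widetilde{G}(\text{p.grade}, 1) = f\mathcal{S}(R) \setminus \{0\}$. To conclude $(\text{p.grade})_1 = e$ it then suffices to show that for each $I \in \overline{F}(R)$ and each $x \in K$ there is a nonzero finitely generated ideal $J$ with $xJ \subseteq I$. For $x = 0$ any nonzero principal ideal works; for $x \neq 0$, the nonzero $R$-submodule $x^{-1}I$ of $K$ contains some $a/b$ with $a, b \in R$ and $a \neq 0$, so $a = b(a/b) \in x^{-1}I \cap R$ and $J = aR$ does the job.

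For $i = 2$, the preparatory lemma above identifies $\widetilde{G}(\text{p.grade}, 2)$ with the set of finitely generated $J$ satisfying $J^{-1} = R$. What remains is the elementary observation that $J^{-1} = R$ is equivalent to $J_v = R$: one direction is $J^{-1} = R \Rightarrow J_v = (J^{-1})^{-1} = R^{-1} = R$, and for the converse, if $x \in J^{-1}$ then $J \subseteq x^{-1}R$, so since the principal fractional ideal $x^{-1}R$ is divisorial we get $J_v \subseteq x^{-1}R$, hence $(J_v)^{-1} = J^{-1}$, and so $J_v = R$ forces $J^{-1} = R$. Consequently $\widetilde{G}(\text{p.grade}, 2) = \{J \in f\mathcal{S}(R) \mid J_v = R\}$ and
\[ I^{(\text{p.grade})_2} = \bigcup\{(I :_K J) \mid J \in f\mathcal{S}(R),\ J_v = R\} = I_w, \]
matching the definition of the $w$-operation from the preliminary examples. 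The main obstacle is this $i = 2$ identification, but the substantive grade-theoretic content has already been packaged into the preparatory lemma; the proof only needs to supply the bookkeeping equivalence $J^{-1} = R \Leftrightarrow J_v = R$.
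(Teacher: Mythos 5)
Your proof is correct and follows essentially the same route as the paper: identify $\widetilde{G}(\text{p.grade},i)$ for $i=0,1,2$, with the $i=2$ case resting on the preparatory lemma that $J^{-1}=R$ iff $\text{p.grade}(J)\geq 2$ for finitely generated $J$. The only difference is that you spell out two small facts the paper leaves implicit (that the union over all nonzero ideals is all of $K$, and that $J^{-1}=R \Leftrightarrow J_v=R$), which is fine.
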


\begin{proof}
First, since $\text{p.grade}$ is an ideal valuation, we have $(\text{p.grade})_{0} = e$. Now recall that \[ G(\text{p.grade},1) = \{ J \in \mathcal{S}(R) \mid \text{p.grade}(J) \geq 1  \}. \] On the other hand, since $R$ is a domain, every nonzero ideal $J$ has $\text{p.grade}(J) \geq 1$, so that $G(\text{p.grade},1) = \mathcal{S}(R) - \{0\}$, and thus \[ I_{(\text{p.grade})_{1}} = \bigcup_{J \in \mathcal{S}(R) - \{0\}} (I :_{K} J) = K \] for any $I \in \overline{F}(R)$. Thus $(\text{p.grade})_{1} = e$. Lastly, to see $(\text{p.grade})_{2} = w$, observe by the above lemma we have that \begin{align*} \widetilde{G}(\text{p.grade},2)  &= \{ J \in f\mathcal{S}(R) \mid \text{p.grade}(J) \geq 2 \} \\ &= \{ J \in f\mathcal{S}(R) \mid J^{-1} = R  \} \\ &= \{ J \in f\mathcal{S}(R) \mid J_{v} = R  \}.   \end{align*} Thus for any $I \in \overline{F}(R)$, we have that \[ I_{(\text{p.grade})_{2}} = \bigcup_{J \in \widetilde{G}(\text{p.grade},2)} (I _{:K} J) = \bigcup_{J \in f\mathcal{S}(R), J_{v} = R} (I :_{K} J ) = I_{w}.\] Thus $(\text{p.grade})_{2} = w$.
\end{proof}

%
%
%
%
%

\begin{ex} \normalfont
Let $R = k[X,Y,Z]$ with $k$ a field and consider the ideal $I = (X^{2}Z, Z^{2}Y, Y^{2}X)$. As discussed above, $\text{p.grade}$ is an ideal valuation on $R$. Now, we have a primary decomposition of $I$:
\[ I = (X^{2},Y)\cap (X,Z^{2}) \cap (Y^{2},Z)\cap(X^{2},Y^{2},Z^{2}).          \]
Since $R$ is Noetherian, grade and $\text{p.grade}$ coincide. Then the primary components have grades $2,2,2,3$ respectively. Thus we have for any $n \geq 1$,

\[  I_{(\text{p.grade})_{n}} = \left\{
\begin{array}{ll}
      K & \text{if } n = 0,1  \\
      R & \text{if } n = 2 \\
      (X^{2},Y)\cap (X,Z^{2}) \cap (Y^{2},Z) = (YZ^{2},XYZ,X^{2}Z,XY^{2}) & \text{if } n = 3 \\
      I & \text{if } n > 3.
\end{array} 
\right. \]
\end{ex}

The height function on ideals almost determines an ideal valuation $\text{ht} \colon \mathcal{S}(R) \to \overline{\mathbb{N}}$. For instance, $\text{ht}(I) = \text{ht}(\sqrt{I})$ for any $I \in \mathcal{S}(R)$, and $\height$ satisfies (IV1) and (IV2). In general though, (IV3) may not be satisfied by the height function.
\newpage 
\begin{ex} \normalfont
Let $E$ be the set of entire functions on the complex plane $\mathbb{C}$. The following are well known properties of $E$ (see \cite[Exercises 13.16-20]{G}):
\begin{enumerate}[label=(\roman*)]
\item $E$ is a non-Noetherian B{\'e}zout domain.
\item  Given $\alpha \in \mathbb{C}$, $p_{\alpha}E$ is a maximal ideal of $E$ that has height 1, where $p_{\alpha} \colon \mathbb{C} \to \mathbb{C}$ is the entire function that sends $z \mapsto z - \alpha$.
\item $\bigcup_{\alpha \in \mathbb{C}} p_{\alpha}E $ is the set of nonunits of $E$.
\item $E$ has a prime ideal $P$ that has infinite height.
\end{enumerate}
Now given any finitely generated ideal $J \subseteq P$ of $E$, $J=aE$ for some $a \in E$ since $E$ is a B{\'e}zout domain. Since $a$ is a nonunit, $a \in p_{\alpha}E$ for some $\alpha \in \mathbb{C}$. Hence $\height(J)=\height(aE)= \inf\{ \height(M)\mid M \in V(aE)\} \le \height (p_{\alpha}E)=1$, and so  $\sup\{\height(J) \mid J \subseteq P, J \in f\mathcal{S}(E)\}\le 1<\infty= \height(P)$. Therefore the height function fails to satisfy (IV3).
\end{ex}

We will introduce next a fairly weak condition on the ring so that $\height \colon \mathcal{S}(R) \to \overline{ \mathbb{N}}$ will be an ideal valuation on $R$. A ring $R$ is said to be \textbf{FGFC} (see \cite{Mar}) if each finitely generated ideal of $R$ has only finitely many minimal primes. We first observe the following property of FGFC rings:

\begin{lemm}
Let $R$ be a FGFC ring and $I$ an ideal with $\text{ht}(I) \geq k$. Then there are $x_{1},\ldots, x_{k} \in I$ such that $\height((x_{1},\ldots,x_{i})R) \geq i$ for each $i \in \{1,\ldots, k\}$.
\end{lemm}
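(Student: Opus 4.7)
The plan is to build $x_1,\dots,x_k$ one at a time by induction on $i$, where at each stage we use the FGFC hypothesis together with prime avoidance to dodge the finitely many ``height-minimizing'' primes that could block us.

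First I would set $J_0 = (0)$ and, assuming $x_1,\dots,x_{i-1}$ have been chosen in $I$ with $\height(J_{i-1}) \geq i-1$ where $J_{i-1} = (x_1,\dots,x_{i-1})R$, describe how to choose $x_i$. Let $M_i$ denote the set of minimal primes of $J_{i-1}$ whose height equals $i-1$ (for $i=1$ this is the set of minimal primes of $R$, which in a domain is just $\{(0)\}$). Because $J_{i-1}$ is finitely generated and $R$ is FGFC, $J_{i-1}$ has only finitely many minimal primes, so $M_i$ is finite. If some $P \in M_i$ contained $I$, then $\height(I) \leq \height(P) = i-1 < k$, contradicting the hypothesis $\height(I) \geq k$. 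Hence $I \not\subseteq P$ for every $P \in M_i$, and by prime avoidance there exists $x_i \in I \setminus \bigcup_{P \in M_i} P$.

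The next step — and the key point of the argument — is to verify that $\height(J_i) \geq i$, where $J_i = J_{i-1} + x_i R$. Let $Q$ be an arbitrary minimal prime of $J_i$. Then $Q \supseteq J_{i-1}$, so $Q$ contains some minimal prime $P$ of $J_{i-1}$, which has $\height(P) \geq i-1$ by the inductive hypothesis. If $\height(P) \geq i$, then $\height(Q) \geq \height(P) \geq i$ and we are done. Otherwise $P \in M_i$, so by construction $x_i \notin P$; but $x_i \in Q$, so the containment $P \subseteq Q$ is strict. Extending a saturated chain of primes of length $\height(P) = i-1$ ending at $P$ by one more step to $Q$ yields $\height(Q) \geq i$. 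Since $Q$ was arbitrary, $\height(J_i) \geq i$.

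The main obstacle is really packaged in this last paragraph: turning the avoidance statement $x_i \notin P$ into a strict inequality of heights requires that passing from a prime $P$ to a strictly larger prime $Q$ increases height by at least one, which I would justify by extending a chain realizing $\height(P)$. Everything else is bookkeeping: iterate from $i=1$ to $i=k$ to produce the required sequence.
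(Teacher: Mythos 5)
Your proposal is correct, but it takes a genuinely different route from the paper's proof. The paper argues by induction on $k$: it chooses $x_{1} \in I$ outside the finitely many minimal primes of $R$ (FGFC applied to $(0)$, plus prime avoidance), then passes to the quotient $R' = R/x_{1}R$, invoking Marley's result that $R'$ is again FGFC, showing $\height(IR') \geq k-1$ by a lifting-of-chains argument, and applying the inductive hypothesis in $R'$ before pulling the elements back. You instead never leave $R$: at each stage you apply the FGFC hypothesis directly to the finitely generated partial ideal $(x_{1},\ldots,x_{i-1})$, avoid only its minimal primes of height exactly $i-1$, and obtain $\height((x_{1},\ldots,x_{i})R) \geq i$ by noting that any minimal prime $Q$ of the new ideal either contains a minimal prime of the old one of height $\geq i$, or strictly contains one of height $i-1$, in which case extending a chain of length $i-1$ by the step up to $Q$ gives $\height(Q) \geq i$ (legitimate precisely because you restricted attention to primes of finite height $i-1$). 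What each approach buys: the paper's quotient induction is a clean induction on $k$ but needs the quotient-stability of FGFC (an external citation) and a somewhat delicate comparison of heights between $R$ and $R/x_{1}R$; your argument is self-contained, using only prime avoidance and the FGFC finiteness applied afresh at each stage, at the mild cost of a slightly more elaborate bookkeeping of which minimal primes must be avoided. Both proofs rest on the same two tools, and your verification of the key step is complete.
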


\begin{proof}
If $k = 0$, we have nothing to do, so we proceed by induction on the height of $I$. So suppose that $\text{ht}(I) = k > 0$. Now if for every $x \in I$, there is some minimal prime $\mathfrak{p}$ such that $x \in \mathfrak{p}$, then since $R$ is FGFC and prime avoidance, $I$ is contained in some minimal prime, a contradiction. Thus there is some $x_{1} \in I$ such that $x_{1}$ does not lie in any minimal prime $\mathfrak{p}$ of $R$. So $\text{ht}(x_{1}R) \geq 1$. 

 Now $Rx_{1}$ is finitely generated, so that by \cite[Proposition $2.2$(b)]{Mar} $R' := R/x_{1}R$ is also a FGFC ring and for $I' := IR'$, we must have that $\text{ht}(I') \geq k -1$.  Indeed, if not, suppose that $\text{ht}(I') < k -1 $. Then there is a chain of prime ideals of length $t < k - 1$ such that the top prime ideal contains $I'$: $P'_{0} \supset \ldots \supset P'_{t}$. This lifts to a chain of prime ideals in $R$ that contain $Rx_{1}$: $P_{0} \supset P_{1} \ldots \supset P_{t} \supset Rx_{1}$. Now $P_{t}$ cannot be a minimal prime of $R$, hence this chain be extended by one. Since $P_{0} \subseteq I$, we get $\height(I) < k$, which is a contradiction. Thus by induction, we may choose elements $x_{2},\ldots, x_{n} \in I$ such that for their images $x'_{2},\ldots, x'_{n} \in R'$, $\height((x'_{2},\ldots,x'_{i})R') \geq i - 1$ for $i = 2,\ldots,k$. It then follows by a similar argument as before that $(x_{1},\ldots,x_{i})R$ has height bigger than or equal to $i$.  
\end{proof}

\begin{coro} Let $R$ be a FGFC domain. Then $\height \colon \mathcal{S}(R) \to \overline{\mathbb{N}}$ is an ideal valuation on $R$. \end{coro}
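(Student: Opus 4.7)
The plan is to verify the three ideal-valuation axioms for $\height$, using the preceding lemma as the only nontrivial input. Axioms (IV1) and (IV2) require no hypothesis on $R$ and are essentially recorded in the paragraph introducing the previous example: $\height(0)=0$ because $(0)$ is the unique minimal prime of a domain, $\height(R)=\infty$ because $V(R)=\varnothing$ and we take $\inf\varnothing=\infty$, and (IV2) follows from the fact that any prime $P\in V(IJ)$ must contain $I$ or $J$, so
\[
\height(IJ)=\inf_{P\in V(IJ)}\height(P)\ \ge\ \min\Bigl\{\inf_{P\in V(I)}\height(P),\,\inf_{P\in V(J)}\height(P)\Bigr\}=\min\{\height(I),\height(J)\}.
\]

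The only real work is (IV3), and this is precisely where FGFC (through the preceding lemma) enters. I will establish
\[
\height(I)=\sup\{\height(J)\mid J\subseteq I,\ J\in f\mathcal{S}(R)\}
\]
by two inequalities. The direction $\ge$ is plain monotonicity: if $J\subseteq I$, then $V(I)\subseteq V(J)$, hence $\height(J)\le\height(I)$, so the supremum on the right is bounded above by $\height(I)$.

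For the reverse inequality I split on whether $\height(I)$ is finite. If $\height(I)=k<\infty$, the preceding lemma yields elements $x_1,\ldots,x_k\in I$ with $\height\bigl((x_1,\ldots,x_k)R\bigr)\ge k$; the ideal $J:=(x_1,\ldots,x_k)R$ is a finitely generated subideal of $I$, so the supremum is at least $k=\height(I)$. If $\height(I)=\infty$, I apply the same lemma once for each $k\ge 1$ to produce a finitely generated $J_k\subseteq I$ with $\height(J_k)\ge k$; the supremum is therefore $\infty=\height(I)$. In either case (IV3) holds, so $\height$ is an ideal valuation. The whole argument is short; no step is an obstacle once the preceding lemma is in hand, and that lemma is exactly the place where the FGFC hypothesis is needed (to ensure that at each stage one can avoid the finitely many minimal primes of $R/(x_1,\ldots,x_{i-1})$ via prime avoidance).
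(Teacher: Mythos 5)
Your proof is correct and follows the same route as the paper: (IV1) and (IV2) hold for $\height$ in any domain, and (IV3) is exactly what the preceding FGFC lemma supplies, applied once when $\height(I)$ is finite and for each $k$ when it is infinite. The paper simply states this more tersely ("(IV3) follows immediately by the above lemma"), so your write-up is a fuller version of the same argument.
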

\begin{proof}
As stated earlier, all we must check is (IV3), which follows immediately by the above lemma.
\end{proof}

\section{A question of Chapman and Glaz}

It is well-known that given a collection of overrings $\{R_{\alpha}\}_{\alpha \in A}$ of a domain $R$, if $*_{\alpha}$ is a semistar operation on $R_{\alpha}$ for each $\alpha \in A$, then the map $*_{A}\colon \overline{F}(R) \to \overline{F}(R)$ defined by $I^{*_{A}}=\cap_{\alpha \in A}(IR_{\alpha})^{*_{\alpha}}$ for each $I \in \overline{F}(R)$ is a semistar operation (\cite[Example 1.3 (d)]{FH}).
In this section, we wish to investigate the following problem posed in \cite[Problem 44]{Cha}: Find conditions for $*_{A}$ to be of finite type, or equivalently, if $\{*_{\alpha}\}_{\alpha \in A}$ is a set of semistar operation on $R$, then when is the semistar operation $*_{A}$ defined by $I^{*_{A}}=\bigcap_{\alpha \in A} I^{*_{\alpha}}$ for each $I \in \overline{F}(R)$ of finite type? 

We will consider this question under the assumption that $R$ is a valuation domain, or in other words,  when $\overline{F}(R)$ is totally ordered under inclusion. In this scenario, every semistar operation is stable. Indeed, given $I,J \in \overline{F}(R)$ in a valuation domain $R$, $I \subseteq J$ without loss of generality, so $(I \cap J)^{*}=I^{*}=I^{*}\cap J^{*}$ for each semistar operation $*$ on $R$. From this observation, we have the following lemma:
\newpage
\begin{lemm}
\label{y2}
Let $R$ be a domain and $\{*_{\alpha}\}_{\alpha \in A}$ a set of semistar operations on $R$.  Then the following hold:
\begin{enumerate}
\item
$\mathcal{F}^{*_{A}}=\cap_{\alpha \in A}\mathcal{F}^{*_{\alpha}}$.
\item
If every semistar operation on $R$ is stable, then $*_{A}$ is of finite type if and only if $\cap_{\alpha \in A}\mathcal{F^{*_{\alpha}}}$ is of finite type.  
\end{enumerate}
\end{lemm}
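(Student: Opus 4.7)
For part (1), the plan is to verify the double containment of $\mathcal{F}^{*_A}$ and $\cap_{\alpha \in A}\mathcal{F}^{*_\alpha}$ directly from the definitions. The easy direction is: if $I^{*_\alpha}=R^{*_\alpha}$ for every $\alpha$, then $I^{*_A}=\cap_\alpha I^{*_\alpha}=\cap_\alpha R^{*_\alpha}=R^{*_A}$, so $I\in\mathcal{F}^{*_A}$. For the converse, the key observation is that $I\in\mathcal{F}^{*_A}$ means $\cap_\alpha I^{*_\alpha}=\cap_\alpha R^{*_\alpha}$, and since $1\in R^{*_\alpha}$ for each $\alpha$, we get $1\in I^{*_\alpha}$ for every $\alpha$. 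From $1\in I^{*_\alpha}$ one concludes $R\subseteq I^{*_\alpha}$, and then idempotence of $*_\alpha$ combined with $I\subseteq R$ forces $I^{*_\alpha}=R^{*_\alpha}$. Hence $I\in\cap_\alpha \mathcal{F}^{*_\alpha}$.

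For part (2), the plan is to use the standard fact that a stable semistar operation $*$ satisfies $*=\bar{*}=*_{\mathcal{F}^*}$ (this is how $\bar *$ is set up in the preliminary section, and it is the content referenced alongside \cite[Corollary 3.9]{FH}), combined with Theorem \ref{FonHuc}. Under the blanket assumption that every semistar operation on $R$ is stable, $*_A$ itself is stable, so $*_A=*_{\mathcal{F}^{*_A}}$. If $\cap_\alpha \mathcal{F}^{*_\alpha}$ is of finite type, then by part (1) so is $\mathcal{F}^{*_A}$, and Theorem \ref{FonHuc}(1) gives that $*_{\mathcal{F}^{*_A}}=*_A$ is of finite type. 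Conversely, if $*_A$ is of finite type, Theorem \ref{FonHuc}(2) says $\mathcal{F}^{*_A}$ is of finite type, and part (1) identifies this with $\cap_\alpha \mathcal{F}^{*_\alpha}$.

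No step here is really an obstacle; the only subtlety is being careful in part (1) that $I^{*_A}=R^{*_A}$ does not obviously force $I^{*_\alpha}=R^{*_\alpha}$ coordinate-by-coordinate, and the $1\in R^{*_\alpha}$ trick plus idempotence is what rescues this. Everything else is assembling Theorem \ref{FonHuc} with the identity $*_A=*_{\mathcal{F}^{*_A}}$ granted by stability.
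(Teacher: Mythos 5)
Your proposal is correct and follows essentially the same route as the paper: part (2) is assembled exactly as in the paper's proof from part (1), Theorem \ref{FonHuc}, and the fact that stability forces $*=*_{\mathcal{F}^{*}}$ (the paper leans on \cite[Theorem 2.10(A)]{FH} for this, so your pointer to Corollary 3.9 is slightly imprecise, but the fact you invoke is the right one and is true). The only divergence is the nontrivial inclusion in part (1), where the paper applies $*_{\beta}$ to $I^{*_{A}}=R^{*_{A}}$ and quotes $(I^{*_{A}})^{*_{\beta}}=I^{*_{\beta}}$ from \cite[Proposition 1.6(4)]{FH}, whereas your observation that $1\in\bigcap_{\alpha}R^{*_{\alpha}}=\bigcap_{\alpha}I^{*_{\alpha}}$ yields $R\subseteq I^{*_{\alpha}}$, hence $I^{*_{\alpha}}=R^{*_{\alpha}}$ by idempotence and $I\subseteq R$, is a correct self-contained substitute.
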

\begin{proof}
$(1)$: Note that $(I^{*{_{A}}})^{*_{\beta}} = I^{*_{\beta}}$ for each $I \in \overline{F}(R)$ and $\beta \in A$ by  (\cite[Proposition 1.6 (4)]{FH}). Therefore, 
\begin{align*}
 I \in \mathcal{F}^{*_{A}}
&\Leftrightarrow I^{*_{A}}=R^{*_{A}} \\
&\Leftrightarrow I^{*_{\alpha}}=R^{*_{\alpha}} \text{for each } \alpha \in A\\ 
&\Leftrightarrow I \in \mathcal{F}^{*_{\alpha}} \text{for each } \alpha \in A\\
&\Leftrightarrow I \in \cap_{\alpha \in A}\mathcal{F}^{*_{\alpha}}.
\end{align*}
$(2)$: Given a semistar operation $*$ on $R$, $*$ is of finite type if and only if $\mathcal{F}^{*}$ is of finite type, by our  assumption, along with Theorem \ref{FonHuc} and \cite[Theorem 2.10(A)]{FH}. Now by $(1)$, $*_{A}$ is of finite type if and only if $\mathcal{F}^{*_{A}}$ is of finite type if and only if $\cap_{\alpha \in A}\mathcal{F^{*_{\alpha}}}$ is of finite type.  
\end{proof}
 
\begin{lemm}
\label{f}
Let $R$ be a valuation domain. Then the following statements hold:
\begin{enumerate} \item (\cite[Lemma 2.40]{Pd}) If $*$ is a finite type semistar operation on $R$, then there is an overring $T$ of $R$ so that $I^{*} = IT$ for each $I \in \overline{F}(R)$. \item (\cite[Theorem 10.1]{Mat}) Each overring of $R$ is of the form $R_{P}$, where $P \in \Spec(R)$.
\end{enumerate}
\end{lemm}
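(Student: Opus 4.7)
This lemma bundles two classical facts, both cited from external sources; nonetheless I will outline how one could reconstruct each using the machinery already developed in this paper. For part~(1), my plan is to exploit two features of the valuation setting: every semistar operation on $R$ is stable (noted in the paragraph preceding the lemma), and every finitely generated ideal of $R$ is principal. Since $*$ is stable and of finite type, Theorem~\ref{FonHuc}(2) shows that the localizing system $\mathcal{F}^{*}$ is of finite type, so every $I \in \mathcal{F}^{*}$ must contain some principal ideal $aR$ with $aR^{*} = R^{*}$, i.e.\ with $a$ a unit in $R^{*}$. I would set
\[
U := \{a \in R \setminus \{0\} \mid aR^{*} = R^{*}\},
\]
which is multiplicatively closed in $R$, and take $T := R_{U} = R[U^{-1}]$ as the candidate overring.

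To finish (1) I would push through the chain of equalities
\[
I^{*} \;=\; \bigcup_{J \in \mathcal{F}^{*}} (I :_{K} J) \;=\; \bigcup_{a \in U} (I :_{K} aR) \;=\; \bigcup_{a \in U} a^{-1} I \;=\; IR_{U} \;=\; IT,
\]
where the first equality is the identification $* = \bar{*}$ for stable $*$ (via \cite[Remark~2.9]{FH} and \cite[Theorem~2.10]{FH}), the second uses finite type together with the principality of finitely generated ideals of $R$, and the remaining steps are formal. This pins down $T$ as the required overring.

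For part~(2) I would run the standard argument. Let $R \subseteq T \subseteq K$ be an overring; then $T$ is itself a valuation domain with the same quotient field and therefore has a unique maximal ideal $M$. Set $P := M \cap R \in \Spec(R)$. The inclusion $R_{P} \subseteq T$ is immediate, since any $s \in R \setminus P$ lies outside $M$ and is therefore a unit of $T$. For the reverse inclusion, given $x \in T$, either $x \in R \subseteq R_{P}$ already, or else the valuation-domain axiom for $R$ supplies $x^{-1} \in R$; this $x^{-1}$ must be a unit of $T$ (since $x \cdot x^{-1} = 1$ with $x \in T$), hence $x^{-1} \notin M$, so $x^{-1} \in R \setminus P$ and $x \in R_{P}$.

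The only real obstacle I foresee is confirming the first equality in the display for~(1), which rests on the stable/spectral correspondence recorded in Section~2 of this paper; once that equality is in hand, the valuation-domain hypothesis does all the work in (1) by collapsing finitely generated ideals to principal ones, and in (2) by providing the dichotomy $x \in R$ or $x^{-1} \in R$. Both steps are purely structural and require no further calculation.
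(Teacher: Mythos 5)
Your reconstructions of both parts are correct; note, however, that the paper itself gives no argument for Lemma~\ref{f} at all --- both statements are simply imported by citation (part~(1) from \cite[Lemma 2.40]{Pd}, part~(2) from \cite[Theorem 10.1]{Mat}) --- so any proof you supply is necessarily ``a different route'' from the paper's. For part~(1), your argument runs through the localizing-system machinery of Section~2: stability of every semistar operation on a valuation domain plus finite type gives $*=\bar{*}$ (via \cite[Theorem 2.10]{FH}, or \cite[Corollary 3.9]{FH}, exactly as the paper uses in Theorem~\ref{bijective}), Theorem~\ref{FonHuc}(2) makes $\mathcal{F}^{*}$ of finite type (stability is not actually needed for that step), and principality of finitely generated ideals collapses the union to $\bigcup_{a\in U}a^{-1}I=IR_{U}$; this is sound, with the only external inputs being the same Fontana--Huckaba facts the paper already invokes. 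The cited source's argument is shorter and avoids localizing systems entirely: since finitely generated fractional ideals of a valuation domain are principal and $(xR)^{*}=xR^{*}$, finite type gives $I^{*}=\bigcup_{x\in I}xR^{*}=IR^{*}$, so one may take $T=R^{*}$ directly (and indeed your $R_{U}$ coincides with $R^{*}$); your version buys consistency with the paper's framework at the cost of the stable/spectral correspondence. Part~(2) is the standard argument behind \cite[Theorem 10.1]{Mat}: an overring $T$ of a valuation domain is again a valuation ring of $K$, hence quasi-local with maximal ideal $M$, and the dichotomy $x\in R$ or $x^{-1}\in R$ pins down $T=R_{M\cap R}$; you have reproduced it correctly.
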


\begin{coro}
\label{r}
Let $R$ be a valuation domain and $*$ a semistar operation on $R$. Then the following are equivalent:
\begin{enumerate}
\item  $*$ is of finite type.
\item For some $P \in \Spec(R)$,  $I^{*}=IR_{P}$ for each $I \in \overline{F}(R)$. 
\item There is an overring $T$ of $R$ so that $I^{*}=IT$ for each $I \in \overline{F}(R)$.
\end{enumerate}
\end{coro}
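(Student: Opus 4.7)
The plan is to prove the equivalences via a cycle $(1) \Rightarrow (3) \Rightarrow (2) \Rightarrow (1)$, since Lemma \ref{f} already supplies almost everything needed. The key observation is that Lemma \ref{f}(1) gives the passage from a finite type semistar operation to multiplication by an overring, and Lemma \ref{f}(2) forces every overring of a valuation domain to be of the form $R_P$. So the corollary is really a repackaging of these two facts together with a short direct check that localization operations are of finite type.

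The implication $(1) \Rightarrow (3)$ is precisely Lemma \ref{f}(1), so nothing has to be done. For $(3) \Rightarrow (2)$, I would take the overring $T$ provided by (3) and invoke Lemma \ref{f}(2) to write $T = R_P$ for some $P \in \Spec(R)$; substituting into $I^* = IT$ then yields $I^* = IR_P$ for all $I \in \overline{F}(R)$.

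The remaining direction $(2) \Rightarrow (1)$ is the only one that requires a small computation. Assuming $I^* = IR_P$ for all $I \in \overline{F}(R)$, I would show that the localization map $I \mapsto IR_P$ is automatically of finite type. Given $I \in \overline{F}(R)$ and an arbitrary $x \in I^* = IR_P$, write $x = y/s$ with $y \in I$ and $s \in R \setminus P$. Setting $F = yR$ gives a cyclic (hence finitely generated) $R$-submodule of $I$ with $x \in FR_P = F^*$, and taking the union of such $F^*$ as $x$ ranges over $I^*$ produces $I^* \subseteq I^{*_f}$. Since the reverse inclusion is automatic for any semistar operation, we get $* = *_f$.

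The main (mild) obstacle is merely the bookkeeping in $(2) \Rightarrow (1)$: one must be sure the approximating finitely generated submodules are of the kind used in the paper's definition of $*_f$ (one can clear denominators by multiplying by a suitable element of $R$ to reduce to a genuine finitely generated ideal if needed, using that $R$ is a valuation domain and finitely generated ideals are principal). Aside from this minor point, the corollary follows immediately from Lemma \ref{f}.
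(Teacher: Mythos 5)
Your proof is correct and takes the route the paper intends: the corollary is stated there with no proof, as an immediate consequence of Lemma \ref{f}, and your cycle $(1)\Rightarrow(3)\Rightarrow(2)\Rightarrow(1)$ supplies exactly the one missing verification, namely that $I\mapsto IR_{P}$ is of finite type. One caution about your closing ``bookkeeping'' remark: the proposed reduction to genuine finitely generated \emph{ideals} by clearing denominators cannot work in general. For instance, with $I=K$ and $x\in K\setminus R_{P}$ one has $x\in K=K^{*}$, yet $JR_{P}\subseteq R_{P}$ for every finitely generated ideal $J\subseteq R$, so no element of $R\setminus P$ clears denominators in the way you suggest; read literally, a definition of $*_{f}$ using only finitely generated ideals would even make the identity operation fail to be of finite type (since then $K^{d_{f}}=R\neq K$). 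The resolution is that no such reduction is needed: $*_{f}$ is to be understood, as in Fontana--Huckaba, as the union of $F^{*}$ over finitely generated $R$-submodules $F$ of $K$ contained in $I$, and with that definition your cyclic-submodule argument $F=yR$, $x=y/s\in FR_{P}=F^{*}$, already closes the proof of $(2)\Rightarrow(1)$. So the main argument stands; only the optional repair you sketched should be dropped.
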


From this, we obtain a characterization of localizing systems of finite type on a valuation domain.

\begin{lemm}
\label{hy}
Let $R$ be a valuation domain and $\mathcal{F}$ a localizing system of $R$. Then $\mathcal{F}$ is of finite type if and only if $\mathcal{F}=\{I \in \mathcal{S}(R)\mid P\subsetneq I\}$ for some prime ideal $P$ of $R$.
\end{lemm}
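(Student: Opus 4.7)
The plan is to use Corollary \ref{r} together with the standard bijection $\mathcal{F} \leftrightarrow *_{\mathcal{F}}$ between localizing systems and stable semistar operations. Since every semistar operation on a valuation domain is stable (as noted in the excerpt), this bijection restricts to one between finite type localizing systems and stable, finite type semistar operations, and Corollary \ref{r} then classifies the latter as precisely the maps $I \mapsto IR_{P}$ for $P \in \Spec(R)$.

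For the nontrivial direction, I would proceed as follows. Suppose $\mathcal{F}$ is a localizing system of finite type. Then $*_{\mathcal{F}}$ is stable and, by Theorem \ref{FonHuc}(1), of finite type, so Corollary \ref{r} furnishes a prime $P \in \Spec(R)$ with $I^{*_{\mathcal{F}}} = IR_{P}$ for each $I \in \overline{F}(R)$. Using the identification $\mathcal{F} = \mathcal{F}^{*_{\mathcal{F}}}$ (\cite[Proposition 2.8]{FH}), I can write
\[
\mathcal{F} = \{ I \in \mathcal{S}(R) \mid IR_{P} = R_{P} \} = \{I \in \mathcal{S}(R) \mid I \not\subseteq P\}.
\]
Invoking the total order on ideals of $R$, the condition $I \not\subseteq P$ is equivalent to $P \subsetneq I$, so $\mathcal{F}$ has the desired shape.

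For the converse, I would fix a prime $P$ of $R$, set $\mathcal{F} = \{ I \in \mathcal{S}(R) \mid P \subsetneq I\}$, and verify the defining properties directly. Axiom (LS1) is immediate. For (LS2), suppose $I \in \mathcal{F}$ and $(J :_{R} i) \in \mathcal{F}$ for every $i \in I$; selecting $i \in I \setminus P$ and subsequently $j \in (J :_{R} i) \setminus P$, primality of $P$ gives $ji \in J \setminus P$, hence $P \subsetneq J$. For finite type, given $I \in \mathcal{F}$ I would pick any $x \in I \setminus P$; the total order on principal ideals of the valuation domain $R$ forces $P \subsetneq xR \subseteq I$, and $xR$ is finitely generated, which settles the claim.

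The only subtlety is the use of $\mathcal{F} = \mathcal{F}^{*_{\mathcal{F}}}$ (which is valid because $*_{\mathcal{F}}$ is always stable) and the translation of $IR_{P} = R_{P}$ into $P \subsetneq I$ via the total order on ideals of $R$; beyond this the argument is essentially bookkeeping.
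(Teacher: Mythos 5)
Your argument is correct and follows essentially the same route as the paper: finite type of $\mathcal{F}$ gives finite type of $*_{\mathcal{F}}$ via Theorem \ref{FonHuc}, Corollary \ref{r} produces the prime $P$, and the identification $\mathcal{F}=\mathcal{F}^{*_{\mathcal{F}}}$ (the paper cites \cite[Theorem 2.10(A)]{FH} for this; it does not depend on stability) converts $IR_{P}=R_{P}$ into $P\subsetneq I$ using the total order, while the converse is exactly the paper's choice of a principal $xR$ with $P\subsetneq xR\subseteq I$. Your extra verification of (LS1)--(LS2) is harmless but unnecessary, since the lemma already assumes $\mathcal{F}$ is a localizing system.
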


\begin{proof}
Suppose that $\mathcal{F}$ is of finite type. Note first that $*_{\mathcal{F}}$ is a semistar operation of finite type on $R$ by Theorem \ref{FonHuc}. Thus by Corollary \ref{r}, there is $P \in \Spec(R)$ with $I^{*_{\mathcal{F}}}=IR_{P}$ for each $I \in \overline{F}(R)$. Now, by \cite[Theorem 2.10(A)]{FH}, we have
\begin{align*} \mathcal{F}
&=\{I\in \mathcal{S}(R)\mid I^{*_{\mathcal{F}}}=R^{*_{\mathcal{F}}}\}\\
&=\{I\in \mathcal{S}(R)\mid IR_{P}=R_{P}\}\\
&=\{I\in \mathcal{S}(R)\mid I\not\subseteq P\}\\
&=\{I\in \mathcal{S}(R)\mid P\subsetneq I\}.
\end{align*}
Conversely, say $\mathcal{F}=\{I \in \mathcal{S}(R)\mid Q\subsetneq I\}$ for some $Q \in \Spec(R)$. Then given $I \in \mathcal{F}$, choose $x\in I\setminus Q$. Since $R$ is a valuation domain, $Q\subsetneq xR\subseteq I$ and $xR \in \mathcal{F}$. Therefore $\mathcal{F}$ is of finite type.
\end{proof}


%
%
\newpage
\begin{lemm}
Let $R$ be a valuation domain, and $\nu$ an ideal valuation of $R$. Then \[\dim(R) +1  \geq \#\{\nu(I)\mid I\in \mathcal{S}(R)\}.\]
\end{lemm}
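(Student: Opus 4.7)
The plan is to leverage Theorem \ref{bijective} and Corollary \ref{r} to transfer the counting of values of $\nu$ into a counting problem about a chain of primes in $\Spec(R)$. Applying Theorem \ref{bijective}, write $\nu$ in terms of its associated standard descending chain $\{*_n\}_{n \geq 0}$ of stable, finite-type semistar operations, with $*_0 = e$. Since $R$ is a valuation domain, Corollary \ref{r} provides primes $P_n \in \Spec(R)$ such that $I^{*_n} = IR_{P_n}$ for every $I \in \overline{F}(R)$ and each $n \geq 0$; in particular, $P_0 = 0$.

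Next, observe that the descending condition $*_m \geq *_n$ for $m \leq n$ translates, by taking $I = R$ in the defining containment $IR_{P_m} \supseteq IR_{P_n}$, to $R_{P_m} \supseteq R_{P_n}$, which in turn is equivalent to $P_m \subseteq P_n$. Thus $\{P_n\}_{n \geq 0}$ is an ascending sequence of primes. Since the spectrum of a valuation domain is totally ordered and has cardinality at most $\dim(R) + 1$, the set of distinct primes appearing in $\{P_n\}$ has cardinality at most $\dim(R) + 1$.

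Finally, compute $\nu(I)$ in these terms: for nonzero $I \in \mathcal{S}(R)$, the chain of equivalences $I^{*_n} = R^{*_n} \Leftrightarrow IR_{P_n} = R_{P_n} \Leftrightarrow I \not\subseteq P_n$ yields $\nu(I) = \sup\{n \mid I \not\subseteq P_n\}$. Because the $P_n$ are totally ordered and $I$ is comparable with each of them, this supremum depends only on the smallest prime of the chain $\{P_n\}$ containing $I$ (or takes the value $\infty$ if no such prime exists). Hence $\nu$ is constant on each block of the resulting partition of $\mathcal{S}(R)$, and the main obstacle is the final bookkeeping step: showing that the distinct values produced by these blocks, together with the boundary values $\nu(0) = 0$ and $\nu(R) = \infty$, fit within the bound $\dim(R) + 1$ promised by the statement.
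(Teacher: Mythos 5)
Your set-up is sound, and it is essentially a heavier-machinery version of the paper's own argument: the paper produces the same ascending chain of primes bare-handedly, by putting $P_{n}=\{x\in R\mid \nu(xR)<n\}$ and checking with (IV2) and comparability of principal ideals that each $P_{n}$ is prime, whereas you reach it through Theorem \ref{bijective} and Corollary \ref{r}. The trouble is that you stop exactly where the lemma begins: the ``final bookkeeping step'' you defer \emph{is} the content of the inequality, and nothing in your write-up carries it out. Concretely, what remains is: if $Q_{1}\subsetneq\cdots\subsetneq Q_{r}$ are the distinct nonzero primes occurring among your $P_{n}$, then $0\subsetneq Q_{1}\subsetneq\cdots\subsetneq Q_{r}$ gives $r\le\dim(R)$; every nonzero ideal $I$ falls into one of $r+1$ blocks (the least $Q_{j}$ containing $I$, or no $Q_{j}$ contains $I$, in which case $\nu(I)=\infty$); and on the block of $Q_{j}$ one has $\nu(I)=\min\{n\mid P_{n}=Q_{j}\}-1$, so $\nu$ is constant on each block. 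Hence $\nu$ takes at most $\dim(R)+1$ values on nonzero ideals. Without this count, the proposal proves nothing beyond the structural description of $\nu$.

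Your hesitation about the boundary value $\nu(0)=0$ is, moreover, well founded, and this is where you should be careful: if the count is taken literally over all of $\mathcal{S}(R)$, zero ideal included, the bound cannot be established, because it fails. On a DVR $R$, the function with $\nu(0)=0$, $\nu(R)=\infty$ and $\nu(I)=5$ for every nonzero proper ideal $I$ satisfies (IV1)--(IV3), yet it takes three values while $\dim(R)+1=2$. So the inequality is correct only for the set of values of $\nu$ on nonzero ideals; the paper's proof is silent on this point as well (it ends with ``the claim follows'') and evidently intends that reading. The repair for your argument is therefore to carry out the block count above over nonzero ideals and not attempt to absorb $\nu(0)=0$ into the bound.
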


\begin{proof}
For each $n > 0$, let $P_{n} = \{ x \in R \mid \nu(xR) < n \}$. First, we show that $P_{n}$ is an ideal of $R$.  Indeed, let $x,y \in P_{n}$ and say $r \in R$. Without loss of generality, we have $xR \subseteq yR$, so that by monotonicity of $\nu$, it follows that $\nu( (x+y)R) \leq \nu(xR + yR) = \nu (yR) < n$. Hence $x+y \in P_{n}$. Also by monotonicity of $\nu$, we have $\nu( (rx)R) \leq \nu(xR) < n$, and so $rx  \in P_{n}$. Thus each $P_{n}$ is an ideal of $R$. Now suppose $I,J$ are ideals of $R$ with $IJ \subseteq P_{n}$. Then either $I \subseteq P_{n}$ or $J \subseteq P_{n}$ by (IV2). Thus for each $n > 0$ we have $P_{n} \in \Spec(R)$, and since the $\{ P_{n} \}^{\infty}_{n=1}$ form an ascending chain, the claim follows.
\end{proof}

\begin{lemm}
\label{y1}
If $\nu$ is an ideal valuation on a valuation domain $R$, then $\nu$ has finite range if and only if $\cap_{n\ge0}G(\nu,n)$ is a localizing system of $R$ of finite type.
\end{lemm}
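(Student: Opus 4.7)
The plan is to translate the question into the language of Lemma \ref{hy}, which characterizes finite type localizing systems on a valuation domain as precisely those of the form $\{I \in \mathcal{S}(R) \mid P \subsetneq I\}$ for some prime $P$. The first step is to observe that by the definition of $G(\nu,n)$,
\[ \bigcap_{n\ge 0} G(\nu,n) \;=\; \{I \in \mathcal{S}(R) \mid \nu(I) = \infty\} \;=: \mathcal{F}_{\infty}, \]
so the task reduces to showing that $\mathcal{F}_{\infty}$ has the prime-ideal form above if and only if $\nu$ has finite range.

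For the forward direction, assume $\nu$ has finite range. I would set $P := \{x \in R \mid \nu(xR) < \infty\}$ and argue that $P$ is a prime ideal: it is an ideal because the principal ideals of $R$ are totally ordered and $\nu$ is monotone, so $\nu((x+y)R)$ and $\nu(rxR)$ are bounded by $\max\{\nu(xR),\nu(yR)\}$ and $\nu(xR)$ respectively; it is prime because $\nu(xyR) = \min\{\nu(xR), \nu(yR)\}$ by Lemma \ref{equality}, so $xy \in P$ forces $x \in P$ or $y \in P$. Next, I would check $\mathcal{F}_{\infty} = \{I \mid P \subsetneq I\}$: if $\nu(I) = \infty$, then by (IV3) and the fact that the finite range must contain its supremum, some finitely generated (hence principal, in a valuation domain) $xR \subseteq I$ has $\nu(xR) = \infty$, yielding $x \in I \setminus P$ and hence $P \subsetneq I$; conversely, if $P \subsetneq I$, then picking $x \in I \setminus P$ and applying Lemma \ref{monotonicity} gives $\nu(I) \geq \nu(xR) = \infty$. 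Lemma \ref{hy} then concludes that $\mathcal{F}_{\infty}$ is of finite type.

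For the reverse direction, assume $\mathcal{F}_{\infty}$ is of finite type. By Lemma \ref{hy} there is a prime $P$ with $\mathcal{F}_{\infty} = \{I \mid P \subsetneq I\}$. Since $P \notin \mathcal{F}_{\infty}$, we have $\nu(P) < \infty$. Now given any $J \in \mathcal{S}(R)$, the total ordering of ideals in a valuation domain forces either $J \subseteq P$, in which case $\nu(J) \leq \nu(P) < \infty$ by Lemma \ref{monotonicity}, or $J \supsetneq P$, in which case $\nu(J) = \infty$. Therefore the range of $\nu$ is contained in the finite set $\{0,1,\ldots,\nu(P),\infty\}$.

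The main subtlety is the step in the forward direction where the finite range hypothesis is used to upgrade the supremum in (IV3) to a maximum realized by a principal ideal; without finiteness of the range one cannot guarantee the value $\infty$ is attained on a principal subideal, which is exactly what allows the identification of $\mathcal{F}_{\infty}$ with $\{I \mid P \subsetneq I\}$. The rest of the argument is routine bookkeeping using the total ordering of ideals in a valuation domain and the correspondence supplied by Lemma \ref{hy}.
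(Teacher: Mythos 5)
Your argument is correct, but it splits from the paper on one of the two directions. The direction ``finite type $\Rightarrow$ finite range'' is essentially the paper's own proof: invoke Lemma \ref{hy} to get a prime $P$ with $\bigcap_{n\ge 0}G(\nu,n)=\{I \mid P\subsetneq I\}$, note $\nu(P)=m<\infty$, and use the total ordering of ideals to conclude the range lies in $\{0,\dots,m,\infty\}$. For the converse, however, the paper is much more economical: if the range is finite, then the descending chain $\{G(\nu,n)\}_n$ stabilizes, so $\bigcap_{n\ge 0}G(\nu,n)=G(\nu,m)$ for some $m$, and $G(\nu,m)$ is already known to be a finite type localizing system by its construction $G(\nu,m)=\mathcal{F}_{\widetilde{G}(\nu,m)}$ via Lemma \ref{1}. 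You instead build the prime $P=\{x\in R\mid \nu(xR)<\infty\}$ (the same device the paper uses in its bound $\dim(R)+1\ge\#\{\nu(I)\}$), verify $\{I\mid \nu(I)=\infty\}=\{I\mid P\subsetneq I\}$ using the finite range to turn the supremum in (IV3) into an attained maximum on a principal subideal, and then apply the converse half of Lemma \ref{hy}. That works, and it has the merit of exhibiting the prime and the intersection explicitly; the cost is extra verification (that $P$ is a prime ideal, and, strictly speaking, that $\bigcap_{n\ge 0}G(\nu,n)$ is a localizing system at all, which Lemma \ref{hy} presupposes --- easily supplied since an intersection of localizing systems satisfies (LS1) and (LS2), and automatic in the paper's version because the intersection equals $G(\nu,m)$), whereas the paper's stabilization argument gets the same conclusion in one line. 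You correctly identified the only real subtlety, namely that finiteness of the range is what guarantees the value $\infty$ is realized on a principal subideal.
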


\begin{proof}
If $\cap_{n\ge0}G(\nu,n)$ is a localizing system of $R$ of finite type, then by Lemma \ref{hy}, there is $P \in \Spec(R)$ so that $\nu(I)=\infty$ if and only if $P\subsetneq I$. In particular, $\nu(P)=m$ for some $m \geq 0$. So, for any $I \in \mathcal{S}(R)$, either $\nu(I)\le m$ or $\nu(I)=\infty$. Therefore $\nu$ has finite range.  On the other hand, if the range of $\nu$ is finite, then there is $m \geq 0$ so that $G(\nu, m)=\cap_{n\ge0}G(\nu,n)$.
\end{proof}


\begin{theorem}
\label{vs}
Let $R$ be a valuation domain, $\{*_{\alpha}\}_{\alpha \in A}$ a set of semistar operations on $R$. 
Then the following are equivalent.
\begin{enumerate}
\item
$*_{A}$ is of finite type.
\item
$\cap_{\alpha \in A}\mathcal{F^{*_{\alpha}}}$ is of finite type.
\item
There exists $\alpha \in A$ such that $I^{*_{A}}=IR^{*_{\alpha}}$ for each $I \in \overline{F}(R)$. 
\end{enumerate}
\end{theorem}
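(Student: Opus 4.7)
The equivalence $(1) \Leftrightarrow (2)$ follows immediately from Lemma \ref{y2}(2), since every semistar operation on a valuation domain is stable (as observed just before Lemma \ref{y2}). For $(3) \Rightarrow (1)$, if $I^{*_A} = IR^{*_\alpha}$ for every $I \in \overline{F}(R)$, then $*_A$ is the extension operation to the overring $R^{*_\alpha}$, which is of finite type by Corollary \ref{r}. Both of these are quick.

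The substantive direction is $(1) \Rightarrow (3)$. Assuming $*_A$ is of finite type, Corollary \ref{r} provides a prime $P \in \Spec(R)$ with $I^{*_A} = I R_P$ for all $I \in \overline{F}(R)$. From this one reads off
\[
\mathcal{F}^{*_A} = \{I \in \mathcal{S}(R) \mid IR_P = R_P\} = \{I \in \mathcal{S}(R) \mid I \not\subseteq P\},
\]
so in particular $P \notin \mathcal{F}^{*_A}$. By Lemma \ref{y2}(1) we have $\mathcal{F}^{*_A} = \bigcap_{\alpha \in A}\mathcal{F}^{*_\alpha}$, so there must exist $\alpha_0 \in A$ with $P \notin \mathcal{F}^{*_{\alpha_0}}$.

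The key step is to observe that this single exclusion already pins down $\mathcal{F}^{*_{\alpha_0}}$ completely. Indeed, since $R$ is a valuation domain, ideals of $R$ are totally ordered by inclusion; since $\mathcal{F}^{*_{\alpha_0}}$ is closed upward (LS1) and avoids $P$, it must avoid every ideal contained in $P$. Hence $\mathcal{F}^{*_{\alpha_0}} \subseteq \{I \mid I \not\subseteq P\} = \mathcal{F}^{*_A}$, and combined with the obvious inclusion $\mathcal{F}^{*_{\alpha_0}} \supseteq \bigcap_{\alpha}\mathcal{F}^{*_\alpha} = \mathcal{F}^{*_A}$ this gives $\mathcal{F}^{*_{\alpha_0}} = \mathcal{F}^{*_A}$. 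Since both $*_{\alpha_0}$ and $*_A$ are stable, each equals the semistar operation associated to its localizing system (\cite[Theorem 2.10(A)]{FH}), so $*_{\alpha_0} = *_A$. In particular $R^{*_{\alpha_0}} = R^{*_A} = R_P$, giving $I^{*_A} = IR_P = IR^{*_{\alpha_0}}$ for all $I \in \overline{F}(R)$.

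The main obstacle is the pigeonhole-style extraction of $\alpha_0$ in the key step: one must recognize that the very prime $P$ describing $*_A$ also serves to witness that some individual $\mathcal{F}^{*_\alpha}$ is already as small as the intersection. This is what forces the chain of intersections to stabilize at a single factor, and it is precisely the total ordering of ideals on a valuation domain that makes this collapse possible; on more general domains the analogous statement would fail.
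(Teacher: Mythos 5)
Your proposal is correct, and $(1)\Leftrightarrow(2)$ and $(3)\Rightarrow(1)$ coincide with the paper's argument; the interesting comparison is in $(1)\Rightarrow(3)$, where you take a genuinely different route. The paper stays at the level of modules: it writes each $R^{*_{\alpha}}=R_{P_{\alpha}}$ via Lemma \ref{f}, notes $P_{\alpha}\subseteq P$, and derives a contradiction from assuming $P_{\alpha}\subsetneq P$ for all $\alpha$ by computing $P^{*_{A}}\supseteq\bigcap_{\alpha}PR_{P_{\alpha}}=R_{P}\supsetneq PR_{P}=P^{*_{A}}$; this needs only that $R^{*_{\alpha}}$ is an overring and that $P^{*_{\alpha}}$ is an $R^{*_{\alpha}}$-module, not stability of the individual $*_{\alpha}$. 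You instead run the pigeonhole through localizing systems: $P\notin\mathcal{F}^{*_{A}}=\bigcap_{\alpha}\mathcal{F}^{*_{\alpha}}$ produces $\alpha_{0}$ with $P\notin\mathcal{F}^{*_{\alpha_{0}}}$, upward closure gives $\mathcal{F}^{*_{\alpha_{0}}}=\mathcal{F}^{*_{A}}$, and then you invoke stability of $*_{\alpha_{0}}$ (valid on a valuation domain, as the paper notes before Lemma \ref{y2}) together with the fact that a stable operation equals the operation induced by its localizing system -- the fact the paper itself uses in Theorem \ref{bijective}, citing \cite[Remark 2.9 and Theorem 2.10]{FH} -- to conclude $*_{\alpha_{0}}=*_{A}$, which is actually stronger than the stated $I^{*_{A}}=IR^{*_{\alpha_{0}}}$. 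Both routes are legitimate and of comparable length; the paper's is more elementary in that it avoids the stable-operation correspondence for this implication, while yours makes the structural reason for the collapse (a localizing system excluding $P$ on a valuation domain is forced down to $\{I\mid I\not\subseteq P\}$, cf.\ Lemma \ref{hy}) more transparent and identifies the witnessing $*_{\alpha_{0}}$ as equal to $*_{A}$. One small correction to your closing commentary: the exclusion step you highlight uses only (LS1), not the total ordering of ideals; where the valuation hypothesis really enters your argument is in Corollary \ref{r} (finite-type operations are extensions to localizations) and in the stability of every semistar operation, which lets you recover $*_{\alpha_{0}}$ from $\mathcal{F}^{*_{\alpha_{0}}}$. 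Also, the sub-item label ``Theorem 2.10(A)'' may not be the precise reference for ``stable implies $*=*_{\mathcal{F}^{*}}$,'' but the fact itself is standard and already relied upon in the paper.
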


\begin{proof}
$(1) \Leftrightarrow (2)$: This follows from Lemma \ref{y2}. $(1) \Rightarrow (3)$: Note that for each $\alpha \in A$, there is some $P_{\alpha} \in \Spec(R)$ such that $R^{*_{\alpha}}=R_{P_{\alpha}}$ by Lemma \ref{f}. 
Suppose that $*_{A}$ is of finite type. Then by Corollary \ref{r}, there is $P \in \Spec(R)$ with  $I^{*_{A}}=IR_{P}$ for each $I \in \overline{F}(R)$. It follows  $P_{\alpha}\subseteq P$ for each $\alpha \in A$. If $P_{\alpha}\subsetneq P$ for each $\alpha \in A$, then $P^{*_{A}}=\cap_{\alpha \in A} P^{*_{\alpha}} \supseteq \cap_{\alpha \in A}PR_{P_{\alpha}}=\cap R_{P_{\alpha}}=R^{*_{A}}=R_{P} \supsetneq PR_{P}=P^{*_{A}}$, a contradiction. Hence $P=P_{\alpha}$ for some $\alpha \in A$, and thus $R^{*_{\alpha}}=R_{P_{\alpha}}=R_{P}$. So $I^{*_{A}}=IR^{*_{\alpha}}$ for each $I \in \overline{F}(R)$. $(3) \Rightarrow (1)$: This follows from Corollary \ref{r}.
\end{proof}

When we have a standard, countable descending chain of finite type and stable semistar operations over a valuation domain, we can say a little more:

\begin{coro}
Let $R$ be a valuation domain and $\mathcal{C}=\{*_{i}\}^{\infty}_{i = 0}$ a standard, countable descending chain of finite type and stable semistar operations on $R$. Then the following are equivalent.
\begin{enumerate}
\item $*_{\mathcal{C}}$ is finite type.
\item
$\nu_{\mathcal{C}}$ has finite range.
\item
$\cap_{n\ge0}G(\nu,n)$ is a localizing system of $R$ of finite type.
\item
$\cap_{n\ge0}G(\nu,n)=G(\nu, m)$ for some $m \in \mathbb{N}$.
\end{enumerate}
\end{coro}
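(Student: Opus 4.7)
The plan is to assemble this corollary from pieces already in place: the bijection of Theorem \ref{bijective} together with the identification of $G(\nu_{\mathcal{C}}, n)$ with $\mathcal{F}^{*_{n}}$ made inside its proof, the finite-type criterion of Theorem \ref{vs}, and the characterization in Lemma \ref{y1}. Write $\nu := \nu_{\mathcal{C}}$. Since $\mathcal{C}$ is a standard, countable descending chain of finite type, stable semistar operations, Theorem \ref{bijective} gives that $\nu$ is an ideal valuation on $R$ with $(\nu)_{n} = *_{n}$, and the computation inside that proof shows $G(\nu, n) = \mathcal{F}^{*_{n}}$ for every $n \ge 0$. So items (3) and (4) of the corollary can equivalently be read as statements about the chain $\{\mathcal{F}^{*_{n}}\}_{n \ge 0}$.

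For $(1) \Leftrightarrow (3)$, I would invoke Theorem \ref{vs} applied to the family $\{*_{n}\}_{n \ge 0}$: that theorem says $*_{\mathcal{C}}$ is of finite type if and only if $\bigcap_{n \ge 0} \mathcal{F}^{*_{n}}$ is of finite type, which by the identification above is $\bigcap_{n \ge 0} G(\nu, n)$. The equivalence $(2) \Leftrightarrow (3)$ is then precisely Lemma \ref{y1}, applied to the ideal valuation $\nu = \nu_{\mathcal{C}}$. At this point the only thing left is $(2) \Leftrightarrow (4)$.

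For $(2) \Rightarrow (4)$, if $\nu$ has finite range, I would choose an integer $m$ strictly greater than every finite value attained by $\nu$; then $\nu(I) \ge m$ forces $\nu(I) = \infty$, so $G(\nu, m) = \{I \in \mathcal{S}(R) : \nu(I) = \infty\} = \bigcap_{n \ge 0} G(\nu, n)$. Conversely, if $\bigcap_{n \ge 0} G(\nu, n) = G(\nu, m)$ for some $m \in \mathbb{N}$, then for any $I$ with $\nu(I) \ge m$ we have $I \in G(\nu, n)$ for all $n$, hence $\nu(I) = \infty$; so the range of $\nu$ is contained in the finite set $\{0, 1, \ldots, m-1, \infty\}$. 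The corollary is really a dictionary translation between the various formulations, and I do not anticipate any genuine obstacle: the main point to be careful about is the identification $G(\nu_{\mathcal{C}}, n) = \mathcal{F}^{*_{n}}$, which is what makes the chain-of-localizing-systems statements (3) and (4) interchangeable with the chain-of-semistar-operations statement (1).
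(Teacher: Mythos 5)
Your proposal is correct and follows essentially the same route as the paper: both rest on the identification $G(\nu_{\mathcal{C}},n)=\mathcal{F}^{*_{n}}$, then use Theorem \ref{vs} (equivalently Lemma \ref{y2}) to link finiteness of $*_{\mathcal{C}}$ with finiteness of $\bigcap_{n\ge 0}G(\nu_{\mathcal{C}},n)$, Lemma \ref{y1} for the finite-range equivalence, and an elementary descending-chain argument for condition (4). The only difference is bookkeeping: you close the cycle via $(2)\Leftrightarrow(4)$ directly, while the paper argues $(2)\Rightarrow(4)\Rightarrow(3)$ using that each $G(\nu_{\mathcal{C}},m)$ is of finite type; both are valid.
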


\begin{proof}
$(1) \Leftrightarrow (2)$: Note that $I \in G(\nu_{\mathcal{C}}, n)$ if and only if $\nu_{\mathcal{C}}(I)\ge n$ if and only if $I ^{*_{n}}=R^{*_{n}}$ if and only if $I \in \mathcal{F}^{*_{n}}$. Thus the conclusion follows from Lemma \ref{y1} and Lemma \ref{y2}. $(2) \Rightarrow (4)$: If the range of $\nu_{\mathcal{C}}$ is finite, then there is $m \geq 0$ with $G(\nu_{\mathcal{C}}, n)=G(\nu_{\mathcal{C}},m)$ for every $n\ge m$. Since $\{G(\nu_{\mathcal{C}},n)\}_{n\ge 0}$ is a descending chain of localizing systems, the claim holds. $(4) \Rightarrow (3)$: This follows from the fact that $G(\nu_{\mathcal{C}},n)$ is of finite type for each $n \ge 0$. $(1) \Leftrightarrow (3)$: Follows from Theorem \ref{vs}.
\end{proof}
\newpage

\end{document}